\newcommand{\R}{\mathbb{R}}
\newcommand{\HH}{\mathbb{H}}
\newcommand{\arsinh}{\operatorname{arsinh}}
\newcommand{\Cl}{\text{Cl}_2}
\newcommand{\Clh}{\text{Clh}_2}
\newcommand{\cL}{\mbox{\fontencoding{OT2}\fontfamily{wncyr}\fontseries{m}\fontshape{n}\selectfont L}}
\newcommand{\ellbar}{\bar{\ell}}
\theoremstyle{definition}
\newtheorem{definition}{Definition}[section]
\newtheorem{remark}[definition]{Remark}
\theoremstyle{plain}
\newtheorem{theorem}{Theorem}
\newtheorem*{theorem*}{Theorem}
\newtheorem{oldtheorem}{Theorem}
\newtheorem{proposition}[definition]{Proposition}
\newtheorem{corollary}[definition]{Corollary}
\newtheorem{lemma}[definition]{Lemma}
\title{A variational principle for cyclic polygons with prescribed
  edge lengths}
\author{Hana Kou\v{r}imsk\'{a}, Lara Skuppin, Boris Springborn}
\date{}
\begin{document}

\maketitle

\begin{abstract}
  We provide a new proof of the elementary geometric theorem on the
  existence and uniqueness of cyclic polygons with prescribed side
  lengths. The proof is based on a variational principle involving the
  central angles of the polygon as variables. The uniqueness follows
  from the concavity of the target function. The existence
  proof relies on a fundamental inequality of information
  theory.

  We also provide proofs for the corresponding theorems of spherical
  and hyperbolic geometry (and, as a byproduct, in $1+1$
  spacetime). The spherical theorem is reduced to the Euclidean
  one. The proof of the hyperbolic theorem treats three cases
  separately: Only the case of polygons inscribed in compact circles
  can be reduced to the Euclidean theorem. For the other two cases,
  polygons inscribed in horocycles and hypercycles, we provide
  separate arguments. The hypercycle case also proves the theorem for
  ``cyclic'' polygons in $1+1$ spacetime. 

  \bigskip\noindent%
  \href{http://www.ams.org/mathscinet/search/mscdoc.html?code=52B60}{52B60}
\end{abstract}

\section{Introduction}

This article is concerned with cyclic polygons, i.e., convex
polygons inscribed in a circle. We
will provide a new proof of the following elementary theorem in Section~\ref{sec:proof_euc}. 

\label{sec:intro}
\begin{theorem}
  \label{thm:euc}
  There exists a Euclidean cyclic polygon with $n\geq 3$ sides of lengths
  $\ell_{1},\ldots,\ell_n\in\R_{>0}$ if and only if they satisfy the
  polygon inequalities
  \begin{equation}
    \label{eq:polygon}
    \ell_{k}<\sum_{\substack{i=1\\i\not=k}}^{n}\ell_{i}\,,
  \end{equation}
  and this cyclic polygon is unique.
\end{theorem}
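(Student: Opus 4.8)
The plan is to settle one direction by the triangle inequality and to get the other direction, together with uniqueness, from a variational principle in the central angles. One implication is classical: the vertices of a cyclic $n$-gon lie on a circle, so the closed polygonal path $P_1\cdots P_nP_1$ is non-degenerate and $\ell_k=\bigl|\sum_{i\ne k}(P_{i+1}-P_i)\bigr|<\sum_{i\ne k}\ell_i$, which is \eqref{eq:polygon}. For the converse, recast the problem as follows. A cyclic polygon inscribed in a circle of radius $r$ is determined up to congruence by its sequence of central angles $\theta_1,\dots,\theta_n$ — the arcs subtended by the sides — which satisfy $\theta_k>0$, $\sum_k\theta_k=2\pi$ and $\ell_k=2r\sin(\theta_k/2)$; conversely, any point of the open simplex $\Theta=\{\theta\in\R^n:\theta_k>0,\ \sum_k\theta_k=2\pi\}$ at which the ratios $\ell_k/(2\sin(\theta_k/2))$ all agree is realized by such a polygon (put $n$ points at the prescribed angular positions on a circle of that common radius). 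On $\Theta$ I would study
\[
F(\theta)=\sum_{k=1}^n\bigl(\Cl(\theta_k)+\theta_k\log\ell_k\bigr),
\]
where $\Cl$ is the Clausen function, with $\Cl(0)=\Cl(2\pi)=0$ and $\Cl'(\theta)=-\log\bigl(2\sin(\theta/2)\bigr)$ on $(0,2\pi)$. Then $\partial F/\partial\theta_k=\log\bigl(\ell_k/(2\sin(\theta_k/2))\bigr)$, so $\nabla F$ is parallel to $(1,\dots,1)$ exactly when those ratios coincide; hence the cyclic polygons with side lengths $\ell_1,\dots,\ell_n$ correspond bijectively to the critical points of $F|_\Theta$, the common ratio being the circumradius. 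It remains to show that $F|_\Theta$ has exactly one critical point precisely when \eqref{eq:polygon} holds.

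For \emph{uniqueness} I would show $F|_\Theta$ is strictly concave, so it has at most one critical point. Its Hessian is diagonal with entries $\Cl''(\theta_k)=-\tfrac12\cot(\theta_k/2)$; if every $\theta_k\le\pi$ this is negative definite on the tangent space $\{v:\sum_k v_k=0\}$ of the constraint, which handles all polygons containing their circumcenter. The only other case (for $n\ge 3$) is that a single angle $\theta_{k_0}>\pi$, whence $\Cl''(\theta_{k_0})>0$ while the rest are $\le 0$; a Cauchy–Schwarz estimate reduces negative definiteness on $\{\sum v_k=0\}$ to $\lvert\cot(\theta_{k_0}/2)\rvert\sum_{k\ne k_0}\tan(\theta_k/2)<1$, i.e.\ $\sum_{k\ne k_0}\tan(\theta_k/2)<\tan\bigl(\sum_{k\ne k_0}\theta_k/2\bigr)$, which holds because $\tan$ is strictly superadditive on positive angles whose sum is $<\pi/2$ and here that sum equals $\pi-\theta_{k_0}/2$. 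Hence $F|_\Theta$ is strictly concave throughout $\Theta$.

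For \emph{existence} I would use that $F$ extends continuously to the compact simplex $\overline{\Theta}$ (since $\Cl$ extends to $[0,2\pi]$) and so attains there a maximum, unique by strict concavity. Because $\Cl'$ blows up at both ends of $(0,2\pi)$, shifting a sufficiently small amount of mass into a vanishing coordinate strictly increases $F$ at any boundary point having at least two nonzero coordinates; thus a boundary maximizer must be a vertex $2\pi e_{k_0}$, and $F(2\pi e_{k_0})=2\pi\log\ell_{k_0}$ forces $\ell_{k_0}=\max_k\ell_k$. To exclude this vertex I need one inward direction along which $F$ increases: for $v=\sum_{k\ne k_0}c_ke_k-\bigl(\sum_{k\ne k_0}c_k\bigr)e_{k_0}$ with $c_k\ge 0$, using $\Cl(2\pi-x)=-\Cl(x)$ one computes
\[
\left.\frac{d}{dt}\right|_{0^+}F\bigl(2\pi e_{k_0}+tv\bigr)=\Bigl(\textstyle\sum_{k\ne k_0}c_k\Bigr)\Bigl(H(q)+\textstyle\sum_{k\ne k_0}q_k\log\ell_k-\log\ell_{k_0}\Bigr),
\]
with $q_k=c_k/\sum_{j\ne k_0}c_j$ and $H(q)=-\sum_k q_k\log q_k$. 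By the Gibbs inequality (non-negativity of relative entropy), $H(q)+\sum_k q_k\log\ell_k=\sum_k q_k\log(\ell_k/q_k)$ is maximized at $q_k\propto\ell_k$ with value $\log\bigl(\sum_{k\ne k_0}\ell_k\bigr)$; so some $v$ makes the bracket positive iff $\sum_{k\ne k_0}\ell_k>\ell_{k_0}$, i.e.\ iff \eqref{eq:polygon} holds for the longest side (equivalently for every side). Thus under \eqref{eq:polygon} the maximum of $F$ lies in the interior of $\Theta$ and gives the desired critical point, while if \eqref{eq:polygon} fails that derivative is $\le 0$ in every inward direction, so $F$ is maximized at the vertex and there is no critical point.

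I expect the existence step to be the main obstacle: pinning down $2\pi e_{k_0}$ (the vertex of the longest side) as the only possible boundary maximizer, and recognizing the inward derivative there as a cross-entropy whose sign is governed exactly by the polygon inequality. The strict-concavity estimate in the one exceptional configuration — polygons not containing their circumcenter — is the other point that does not come for free.
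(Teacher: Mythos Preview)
Your proposal is correct and follows the same overall route as the paper: the same target function $F$ on the simplex of central angles, the same Lagrange-multiplier identification of critical points with cyclic polygons, strict concavity for uniqueness, and the information (Gibbs) inequality governing the one-sided derivative at the vertices for existence. The boundary analysis and the vertex derivative formula you write down match the paper's Propositions~\ref{prop:max_boundary} and~\ref{prop:max_vertex} essentially verbatim.

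The one genuine difference is how you prove strict concavity. The paper does not compute the Hessian; it quotes Rivin's result that $V_3=\sum_{k=1}^3\Cl(\alpha_k)$ is strictly concave on $D_3$ and then inducts by ``cutting off a triangle'', using the identity $V_n(\alpha)=V_{n-1}(\alpha_1,\dots,\alpha_{n-1}+\alpha_n)+V_3\bigl(\sum_{k\le n-2}\alpha_k,\alpha_{n-1},\alpha_n\bigr)$. Your argument instead computes $\Cl''(\theta)=-\tfrac12\cot(\theta/2)$ directly, handles the generic case (all $\theta_k\le\pi$) by sign, and in the remaining case (one $\theta_{k_0}>\pi$) uses Cauchy--Schwarz to reduce negative definiteness on the constraint hyperplane to the inequality $\sum_{k\ne k_0}\tan(\theta_k/2)<\tan\bigl(\sum_{k\ne k_0}\theta_k/2\bigr)$, which follows from strict superadditivity of $\tan$ on positive angles with sum below $\pi/2$. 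Your argument is self-contained (no appeal to Rivin) and isolates precisely where the obstruction to concavity could lie; the paper's argument is shorter once $V_3$ is granted and has the pleasant feature that the induction step mirrors the geometric operation of triangulating the polygon from the center.
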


Our proof involves a variational principle with the central angles as
variables. The variational principle has a geometric interpretation in
terms of volume in $3$-dimensional hyperbolic space (see
Remark~\ref{rem:hyperbolic_volume}).
Another striking feature of our proof is the use of a
fundamental inequality of information theory:

\begin{theorem*}[``Information Inequality'']
  Let $p=(p_1, \ldots, p_m)$ and $q=(q_1, \ldots, q_m)$ be discrete
  probability distributions, then
\begin{equation}
\label{eq:information}
\sum_{k=1}^{m} p_k\log\frac{p_k}{q_k} \geq 0,
\end{equation} 
and equality holds if and only if $p=q$.
\end{theorem*}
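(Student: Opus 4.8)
The plan is to reduce the Information Inequality to the single elementary estimate $\log x \le x - 1$, valid for all $x > 0$, with equality if and only if $x = 1$. This estimate just says that the graph of the logarithm lies below its tangent line at $x = 1$; it follows at once from the strict concavity of $\log$, or by checking that $x \mapsto x - 1 - \log x$ attains its unique minimum at $x = 1$. Note that the value of the left-hand side of \eqref{eq:information} is merely multiplied by a positive constant when the base of the logarithm is changed, so neither the asserted inequality nor its equality case depends on the base, and we may take $\log$ to be the natural logarithm.

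Before applying the estimate I would dispose of the degenerate terms. We use the standard convention $0 \log \frac{0}{q} = 0$, justified because $t \log t \to 0$ as $t \to 0^{+}$. If $q_k = 0$ for some index $k$ with $p_k > 0$, then the left-hand side of \eqref{eq:information} equals $+\infty$ and the inequality holds trivially and strictly; so we may assume $q_k > 0$ whenever $p_k > 0$. Writing $S = \{k : p_k > 0\}$, only the indices in $S$ contribute to the sum.

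Now apply the estimate with $x = q_k / p_k$ for each $k \in S$, multiply through by $p_k > 0$, and sum over $S$:
\begin{align*}
  -\sum_{k=1}^{m} p_k \log\frac{p_k}{q_k}
  &= \sum_{k \in S} p_k \log\frac{q_k}{p_k}
   \le \sum_{k \in S}\bigl(q_k - p_k\bigr)\\
  &= \Bigl(\sum_{k \in S} q_k\Bigr) - 1 \le 0,
\end{align*}
where the two final steps use $\sum_{k \in S} p_k = \sum_{k=1}^{m} p_k = 1$ and $\sum_{k \in S} q_k \le \sum_{k=1}^{m} q_k = 1$. This proves \eqref{eq:information}. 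For the equality case: if equality holds, both inequalities in the display must be equalities. Equality in the first forces $q_k / p_k = 1$, i.e.\ $p_k = q_k$, for every $k \in S$; equality in the second forces $\sum_{k \in S} q_k = 1$, hence $q_k = 0 = p_k$ for $k \notin S$ as well. Thus $p = q$, and the converse implication is trivial.

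The argument has essentially no hard step: the entire analytic content is the tangent-line bound for the logarithm. The only point requiring care --- and the one I would state most precisely --- is the bookkeeping for indices where $p_k$ or $q_k$ vanishes, which is what makes the convention $0 \log 0 = 0$ and the restriction to $S$ necessary for a clean equality characterization.
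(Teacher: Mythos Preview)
Your proof is correct and matches the paper's indicated approach: the paper does not spell out a proof but simply notes that the inequality follows from the strict concavity of the logarithm and cites Cover and Thomas, and your argument via the tangent-line bound $\log x \le x-1$ is precisely the standard implementation of that idea. Your handling of the degenerate indices is careful and the equality case is argued cleanly.
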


The left hand side of inequality~\eqref{eq:information} is called the
\emph{Kullback--Leibler divergence} or \emph{information gain} of
$q$ from $p$, also the \emph{relative entropy} of $p$ with respect to
$q$. The inequality follows from the strict concavity of the logarithm
function (see, e.g., Cover and Thomas~\cite{Cover1991}).

In Sections~\ref{sec:proof_sph} and~\ref{sec:proof_hyp} we provide
proofs for non-Euclidean versions of Theorem~\ref{thm:euc}. The
spherical version requires an extra inequality:

\begin{theorem}
  \label{thm:sph}
  There exists a spherical cyclic polygon with $n\geq 3$ sides of
  lengths $\ell_{1},\ldots,\ell_n\in\R_{>0}$ if and only if they
  satisfy the polygon inequalities~\eqref{eq:polygon} and
  \begin{equation}
    \label{eq:polygon_sph}
    \sum_{i=1}^{n}\ell_{i} < 2\pi\,,
  \end{equation}
  and this cyclic spherical polygon is unique.
\end{theorem}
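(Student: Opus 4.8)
The plan is to reduce Theorem~\ref{thm:sph} to Theorem~\ref{thm:euc}. The key observation is that a circle on the unit sphere $S^{2}\subset\R^{3}$ is, as a subset of $\R^{3}$, an ordinary Euclidean circle lying in an affine plane, and that a spherical cyclic polygon inscribed in it projects to a Euclidean cyclic polygon in that plane. Concretely, if a spherical cyclic $n$-gon is inscribed in a circle of angular radius $\rho\in(0,\tfrac{\pi}{2})$, the associated planar polygon has circumradius $r=\sin\rho\in(0,1)$, and its Euclidean side lengths are the chord lengths $L_{i}=2\sin(\ell_{i}/2)$ of the spherical side lengths $\ell_{i}$ (the geodesic distance on $S^{2}$ between two points at chordal distance $L$ is $2\arcsin(L/2)$). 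Conversely, a Euclidean cyclic $n$-gon with side lengths $L_{i}$ and circumradius $r<1$ can be positioned as a planar figure in $\R^{3}$ in the plane at distance $\sqrt{1-r^{2}}$ from the origin, so that all its vertices land on $S^{2}$; joining consecutive vertices by geodesic arcs then yields a convex spherical polygon (it lies in an open hemisphere) with spherical side lengths $2\arcsin(L_{i}/2)$. Via this correspondence, the statement to be proved becomes: $\ell_{1},\dots,\ell_{n}$ satisfy \eqref{eq:polygon} and \eqref{eq:polygon_sph} if and only if $L_{i}:=2\sin(\ell_{i}/2)$ satisfy the Euclidean polygon inequalities and, in that case, the (by Theorem~\ref{thm:euc} unique) Euclidean cyclic polygon with side lengths $L_{i}$ has circumradius strictly less than $1$.

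For necessity I would invoke the classical fact that a convex spherical polygon is contained in an open hemisphere. Each side is then a geodesic arc of length less than $\pi$, hence the unique shortest connection between its endpoints, which yields the polygon inequalities \eqref{eq:polygon} just as in the Euclidean case; and the circumscribed circle has angular radius $\rho<\tfrac{\pi}{2}$, so the perimeter $\sum_{i}\ell_{i}$ is strictly less than the circumference $2\pi\sin\rho<2\pi$, which is \eqref{eq:polygon_sph}.

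For sufficiency and uniqueness, assume \eqref{eq:polygon} and \eqref{eq:polygon_sph}; adding $\ell_{k}$ to both sides of \eqref{eq:polygon} and using \eqref{eq:polygon_sph} gives $\ell_{k}<\pi$ for every $k$, so the $L_{i}$ are genuine chord lengths with $\arcsin(L_{i}/2)=\ell_{i}/2$. First I would verify that the $L_{i}$ satisfy the Euclidean polygon inequalities: with $\lambda_{i}=\ell_{i}/2$ one has $\lambda_{k}<\sum_{i\ne k}\lambda_{i}$ and $\sum_{i}\lambda_{i}<\pi$, and the elementary facts that $\sin$ is subadditive on $[0,\pi]$ and that $0<a<b$ with $a+b<\pi$ forces $\sin a<\sin b$ together give $\sin\lambda_{k}<\sin\bigl(\sum_{i\ne k}\lambda_{i}\bigr)\le\sum_{i\ne k}\sin\lambda_{i}$. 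Theorem~\ref{thm:euc} then provides a unique Euclidean cyclic $n$-gon $P$ with side lengths $L_{i}$; call its circumradius $R$. The crucial step is to show $R<1$. For this I would study the strictly decreasing function $g(r)=\sum_{i}2\arcsin\bigl(\sin(\ell_{i}/2)/r\bigr)$, noting $g(1)=\sum_{i}\ell_{i}<2\pi$, and distinguish two cases according to whether the circumcenter of $P$ lies in the interior of $P$ (all central angles below $\pi$) or outside it (the central angle of the longest side, say $\ell_{k}$, exceeds $\pi$). In the first case $R$ is the root of $g(r)=2\pi$, so $R<1$ because $g$ is decreasing and $g(1)<2\pi$. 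In the second case $R$ is the root of $\arcsin\bigl(\sin(\ell_{k}/2)/r\bigr)=\sum_{i\ne k}\arcsin\bigl(\sin(\ell_{i}/2)/r\bigr)$, and $R<1$ follows from the polygon inequality $\ell_{k}<\sum_{i\ne k}\ell_{i}$ by an intermediate value argument: at $r=1$ the left-hand side equals $\ell_{k}/2$ and is below the right-hand side, while at $r=\sin(\ell_{k}/2)$ the left-hand side equals $\tfrac{\pi}{2}$ and (since we are in the case where $g$ never reaches $2\pi$) exceeds it. With $R<1$ in hand, wrapping $P$ onto $S^{2}$ as described produces the desired spherical cyclic polygon. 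For uniqueness, any spherical cyclic polygon with side lengths $\ell_{i}$ projects, from its circumscribed circle, to a Euclidean cyclic polygon with side lengths $L_{i}$ and circumradius below $1$, which is unique by Theorem~\ref{thm:euc}; this fixes the angular radius $\rho=\arcsin R$ and hence the polygon up to isometry of $S^{2}$.

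The main obstacle is the circumradius bound $R<1$. This is the point where both hypotheses genuinely enter --- the perimeter bound \eqref{eq:polygon_sph} when the circumcenter lies inside $P$, the polygon inequality \eqref{eq:polygon} when it lies outside --- and it requires the case distinction together with the monotonicity and intermediate-value bookkeeping indicated above. A secondary point to be handled carefully is the spherical geometric input (a convex spherical polygon lies in an open hemisphere, so its sides are shortest geodesics and its circumscribed circle has angular radius below $\tfrac{\pi}{2}$), which underlies both the necessity argument and the identification of spherical cyclic polygons with sufficiently small Euclidean ones.
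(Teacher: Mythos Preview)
Your reduction to Theorem~\ref{thm:euc} via chord lengths $L_i=2\sin(\ell_i/2)$ is exactly the paper's strategy, and your verification that the $L_i$ satisfy the Euclidean polygon inequalities is essentially the paper's Proposition~3.1 (your subadditivity-of-sine step is the paper's Lemma~3.3). The genuine difference is in the proof that the Euclidean circumradius satisfies $R<1$ (the paper's Proposition~3.2). The paper argues Case~1 (all central angles $\le\pi$) by the one-line pigeonhole observation that $\sum\ell_k<2\pi=\sum\alpha_k$ forces $\ell_k<\alpha_k$ for some $k$; and it handles Case~2 (one central angle $>\pi$) by \emph{induction on $n$}, cutting off a triangle and reducing to the base case $n=3$, which is done by an explicit trigonometric computation. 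Your argument instead analyses the function $g(r)=\sum_i 2\arcsin\!\big(\sin(\ell_i/2)/r\big)$ directly, using monotonicity in Case~1 and an intermediate-value argument in Case~2, in the spirit of Penner's proof of Theorem~\ref{thm:euc}. Both routes are valid; the paper's induction avoids any discussion of how many roots the Case~2 equation might have, while your approach is more self-contained but leans on Theorem~\ref{thm:euc} a second time.

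One point in your Case~2 deserves to be made explicit. Your intermediate-value argument locates \emph{a} root of $h(r)=\arcsin(\sin(\ell_k/2)/r)-\sum_{i\ne k}\arcsin(\sin(\ell_i/2)/r)$ in the interval $(\sin(\ell_k/2),1)$, but you then need to know that this root is the actual circumradius $R$. Since $h$ is not obviously monotone, the cleanest way to close this is to observe that every root $r_0>\sin(\ell_k/2)$ of $h$ produces a bona fide Euclidean cyclic polygon with side lengths $L_i$ (take central angles $\alpha_i=2\arcsin(L_i/2r_0)$ for $i\ne k$ and $\alpha_k=2\pi-\sum_{i\ne k}\alpha_i$), so the uniqueness statement in Theorem~\ref{thm:euc} forces $h$ to have only one such root, and hence $R$ is the one you found. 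The same uniqueness is what justifies your parenthetical ``since we are in the case where $g$ never reaches $2\pi$'': a solution of $g(r)=2\pi$ would yield a second Euclidean cyclic polygon.
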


Inequality~\eqref{eq:polygon_sph} is necessary because the
perimeter of a circle in the unit sphere cannot be greater than
$2\pi$, and the perimeter of the inscribed polygon is a lower
bound. We require strict inequality to exclude polygons that
degenerate to great circles (with all interior angles equal to $\pi$).

In Section~\ref{sec:proof_sph}, we prove Theorem~\ref{thm:sph} by a
straightforward reduction to Theorem~\ref{thm:euc}: Connecting the
vertices of a spherical cyclic polygon by straight line segments in
the ambient Euclidean $\R^{3}$, one obtains a Euclidean cyclic
polygon. 

In the case of hyperbolic geometry, the notion of ``cyclic polygon''
requires additional explanation. We call a convex hyperbolic polygon
\emph{cyclic} if its vertices lie on a curve of constant non-zero
curvature. Such a curve is either
\begin{compactitem}
\item a hyperbolic circle if the curvature is greater than $1$,
\item a horocycle if the curvature is equal to $1$,
\item a hypercycle, i.e., a curve at constant distance from a geodesic
  if the curvature is strictly between $0$ and $1$.
\end{compactitem}
\begin{theorem}
  \label{thm:hyp}
  There exists a hyperbolic cyclic polygon with $n\geq 3$ sides of lengths 
  $\ell_{1},\ldots,\ell_n\in\R$ if and only if they satisfy the
  polygon inequalities~\eqref{eq:polygon}, and this cyclic hyperbolic
  polygon is unique.
\end{theorem}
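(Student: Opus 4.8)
The plan is to prove necessity of the polygon inequalities and then to prove the converse (existence and uniqueness together) by a trichotomy --- circle, horocycle, hypercycle --- organized by the sign of a single real number. Necessity is immediate: a convex hyperbolic polygon is a closed piecewise-geodesic path, so by the triangle inequality each side is shorter than the sum of the others, strictly because the polygon is non-degenerate; there is no companion inequality as in the spherical case, since circles, horocycles and hypercycles in $\HH^2$ all have unbounded (or infinite) perimeter. For the converse, the computational input is the hyperbolic law of cosines applied to the isosceles triangle spanned by an edge and the two circumradii through its endpoints: a chord of central angle $\varphi$ in a hyperbolic circle of radius $r$ has length $\ell$ with $\sinh(\ell/2)=\sinh(r)\sin(\varphi/2)$, and the analogous computations on a horocycle and on a hypercycle at distance $d$ from its axis give that $\sinh(\ell/2)$ is proportional to the separation of the endpoints along the horocycle, respectively $\sinh(\ell/2)=\cosh(d)\sinh(\sigma/2)$ with $\sigma$ the separation of the endpoints along the axis. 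Writing $a_k:=\sinh(\ell_k/2)$ and $D:=2\max_k a_k-\sum_k a_k$, and using that the central angles of a cyclic polygon sum to $2\pi$ while for horocycles and hypercycles exactly one ``wrap-around'' edge closes the figure, one checks that a cyclic hyperbolic polygon is inscribed in a circle iff $D<0$, in a horocycle iff $D=0$, and in a hypercycle iff $D>0$. Since every curve of constant non-zero curvature is of exactly one of these three types, the sign of $D$ partitions the cyclic polygons into three mutually exclusive, collectively exhaustive classes; and the (strict) superadditivity of $\sinh$ on $[0,\infty)$ shows that $D\le 0$ already implies the polygon inequalities for the $\ell_k$.

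The circle and horocycle cases I would dispatch with little effort. In the circle case ($D<0$), the relation $a_k=\sinh(r)\sin(\varphi_k/2)$ with $\sum\varphi_k=2\pi$ says precisely that $(a_1,\dots,a_n)$ is the edge-length vector of a Euclidean cyclic polygon --- circumradius $\tfrac12\sinh r$, central angles $\varphi_k$ --- and conversely; so the $a_k$ satisfy the Euclidean polygon inequalities (equivalently $D<0$) if and only if there is such a Euclidean polygon, which is unique by Theorem~\ref{thm:euc}, and reading off its circumradius $\rho$ and central angles and setting $\sinh r=2\rho$ produces the unique hyperbolic cyclic polygon inscribed in a circle with the prescribed side lengths. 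In the horocycle case ($D=0$) I would argue by an explicit rigid construction in the upper half-plane: realize the horocycle as a horizontal line, place the $n$ vertices on it in the prescribed cyclic order, and note that exactly one edge wraps past the ideal ``centre''; its value of $\sinh(\ell/2)$ must equal the sum of those of the remaining edges, which forces it to be the unique longest side and makes $D=0$ exactly the condition for the figure to close. This leaves no freedom beyond the isometries fixing the horocycle, so the polygon exists and is unique.

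The hypercycle case ($D>0$, together with the polygon inequalities) is the main obstacle, as it cannot be reduced to Theorem~\ref{thm:euc}. Again one edge wraps around the hypercycle (which runs between the two ideal endpoints of its axis) and is forced to be the longest side; once its role is fixed, the polygon depends only on the single parameter $\mu=\cosh d\in[1,\infty)$, through $\sigma_k=2\arsinh(a_k/\mu)$ subject to the closing relation $\sigma_{\mathrm{max}}=\sum_{k\ne\mathrm{max}}\sigma_k$. Everything then reduces to showing that $\Phi(\mu):=\sum_{k\ne\mathrm{max}}\arsinh(a_k/\mu)-\arsinh(a_{\mathrm{max}}/\mu)$ has exactly one zero in $[1,\infty)$. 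Existence of a zero is the intermediate value theorem: $\Phi(1)=\tfrac12\bigl(\sum_{k\ne\mathrm{max}}\ell_k-\ell_{\mathrm{max}}\bigr)>0$ by the polygon inequalities, while $\Phi(\mu)\sim\mu^{-1}\bigl(\sum_{k\ne\mathrm{max}}a_k-a_{\mathrm{max}}\bigr)<0$ as $\mu\to\infty$ since $D>0$. Uniqueness is the delicate point: I would show that $\Phi$ changes sign only once by checking that the ratio $R(\mu)=\sum_{k\ne\mathrm{max}}\tfrac{a_k}{a_{\mathrm{max}}}\sqrt{\tfrac{\mu^2+a_{\mathrm{max}}^2}{\mu^2+a_k^2}}$ is strictly decreasing in $\mu$ with $R(\mu)\to\sum_{k\ne\mathrm{max}}a_k/a_{\mathrm{max}}<1$, so that $\Phi'$ vanishes at most once, $\Phi$ is unimodal on $[1,\infty)$, and its zero is unique; alternatively --- and more in the spirit of the Euclidean argument, and this is what also yields the $1{+}1$-spacetime statement --- by producing a strictly concave target function in angle-type variables whose gradient vanishes exactly at the desired polygon. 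Establishing this strict concavity (or the monotonicity of $R$) is where the real work lies, the other two cases being essentially bookkeeping on top of Theorem~\ref{thm:euc} and an explicit construction. Finally one assembles: given side lengths satisfying the polygon inequalities, the sign of $D$ selects exactly one of the three classes, each of which has been shown to contain a polygon with those side lengths, unique within its class; since the classes are disjoint, the cyclic hyperbolic polygon exists and is unique.
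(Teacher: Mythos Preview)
Your plan is essentially the paper's proof. The trichotomy via the sign of $D=2\max_k a_k-\sum_k a_k$ (equivalently, via whether the chordal lengths $\bar\ell_k=2\sinh(\ell_k/2)$ satisfy, saturate, or violate the polygon inequalities) is exactly Proposition~\ref{prop:ellbar_ineq}; the reduction of the circle case to Theorem~\ref{thm:euc} in the hyperboloid model, the explicit horocycle construction, and the intermediate-value analysis of a one-variable function $\Phi$ in the hypercycle case all coincide with Proposition~\ref{prop:ex_uniq_hyp}.

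The only genuine difference is your uniqueness argument in the hypercycle case. The paper shows directly that $\Phi'$ is positive at every zero of $\Phi$ by rewriting $\Phi'$ in terms of $\tanh a_k(x)$ and invoking the subadditivity $\tanh\sum a_j<\sum\tanh a_j$. Your route---monotonicity of $R(\mu)$, hence $\Phi'$ changes sign at most once, hence $\Phi$ has a single valley and, given $\Phi(1)>0$ and $\Phi(\mu)\to 0^-$, a unique zero---is equally valid and no harder: each summand of $R$ is visibly decreasing because $a_{\max}\ge a_k$ makes $(\mu^2+a_{\max}^2)/(\mu^2+a_k^2)$ decreasing in $\mu$. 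So the ``real work'' you anticipate there is light. (You should also note, or deduce by contradiction, that $R(1)>1$, so that the valley shape actually occurs; this follows from the polygon inequality and the same $\tanh$ subadditivity.)

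One caution about your alternative: the paper writes down the natural variational target $\varphi_\ell$ for the hypercycle\,/\,$1{+}1$-spacetime case in Section~\ref{sec:concluding} and observes that it is \emph{neither concave nor convex} on the constraint set, so a concavity-based proof in the style of Section~\ref{sec:proof_euc} does not go through directly. Drop that alternative and keep your $R$-monotonicity argument.
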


We prove this theorem in Section~\ref{sec:proof_hyp}. The case of
hyperbolic polygons inscribed in circles can be reduced to
Theorem~\ref{thm:euc} by considering the hyperboloid model of the
hyperbolic plane: Connecting the vertices of a hyperbolic polygon
inscribed in a circle by straight line segments in the ambient
$\R^{2,1}$, one obtains a Euclidean cyclic polygon.

The cases of polygons inscribed in horocycles and hypercycles cannot
be reduced to the Euclidean case because the intrinsic geometry of the
affine plane of the polygon is not Euclidean: In the horocycle case,
the scalar product is degenerate with a $1$-dimensional kernel. Hence,
this case reduces to the case of degenerate polygons inscribed in a
straight line. It is easy to deal with. In the hypercycle case, the
scalar product is indefinite. This case reduces to polygons inscribed
in hyperbolas in flat $1+1$ spacetime. The variational principle of
Section~\ref{sec:proof_euc} can be adapted for this case (see
Section~\ref{sec:concluding}), but the corresponding target function
fails to be concave or convex. It may be possible to base a proof of
existence and uniqueness on this variational principle, perhaps using
a $\min$-$\max$-argument, but we do not pursue this route in this
article. Instead, we deal with polygons inscribed in hypercycles using
a straightforward analytic argument.

\paragraph{Some history, from ancient to recent.}

Theorems~\ref{thm:euc}--\ref{thm:hyp} belong to the circle of results
connected with the classical isoperimetric problem. 
As the subject is ancient and the body of literature is vast, we can
only attempt to provide a rough historical perspective and ask for
leniency regarding any essential work that we fail to
mention.

The early history of the relevant results about polygons is briefly
discussed by Steinitz~\cite{Steinitz} (section 16). Steinitz goes on
to discuss analogous results for polyhedra, a topic into which we will not
go. A more recent and comprehensive survey of proofs of
the isoperimetric property of the circle was given by
Bl{\aa}sj{\"o}~\cite{Blasjo2005}. 

It was known to Pappus that the regular $n$-gon had the largest area
among $n$-gons with the same perimeter, and that the area grew with
the number of sides. This was used to argue for the isoperimetric
property of the circle:

\begin{oldtheorem}[Isoperimetric Theorem]
  \label{thm:isoperimetric}
  Among all closed planar curves with given length, only the circle
  encloses the largest area.
\end{oldtheorem}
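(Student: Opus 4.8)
The plan is to follow the classical reduction --- essentially the argument known to Pappus --- of the isoperimetric property of the circle to the corresponding statement about polygons, with Theorem~\ref{thm:euc} supplying the crucial fact about polygons. Let $\gamma$ be a simple closed rectifiable curve of length $L$ enclosing area $A$. Passing to its convex hull does not increase the length and does not decrease the enclosed area, both strictly unless $\gamma$ is already convex; so it suffices to treat convex $\gamma$, and a non-convex curve cannot be extremal. For convex $\gamma$, inscribe a sequence of convex polygons $P_m$ whose vertices subdivide $\gamma$ with mesh tending to $0$: then each $P_m$ has perimeter at most $L$ and its enclosed area tends to $A$. Thus everything reduces to showing that a convex polygon with at most $n$ vertices and perimeter at most $L$ encloses area at most $\tfrac{L^{2}}{4n}\cot\tfrac{\pi}{n}$, together with an analysis of the equality case.

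For the polygon estimate, a maximiser of area among convex (possibly degenerate) polygons with at most $n$ vertices and perimeter at most $L$ exists by compactness --- such polygons have diameter at most $L/2$, so after translation lie in a fixed disc --- call it $P^{*}$. Sliding a single vertex of $P^{*}$ along the ellipse with foci at its two neighbours keeps the perimeter fixed and changes the area by exactly the area of the triangle that vertex spans over the fixed chord joining its neighbours; this triangle is largest precisely when the two incident edges are equal, so $P^{*}$ is equilateral. To see that $P^{*}$ is moreover regular I would invoke the classical fact that among all polygons with a prescribed list of edge lengths the area is maximised, uniquely, by the cyclic one --- which exists and is unique by Theorem~\ref{thm:euc} --- applied with all edges equal. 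A regular polygon with $m\le n$ vertices and perimeter at most $L$ has area at most $\tfrac{L^{2}}{4m}\cot\tfrac{\pi}{m}\le\tfrac{L^{2}}{4n}\cot\tfrac{\pi}{n}$, because $\tfrac{1}{n}\cot\tfrac{\pi}{n}$ increases with $n$ (an elementary derivative computation) to the limit $1/\pi$; in particular it is strictly below $\tfrac{L^{2}}{4\pi}$, and letting $m\to\infty$ in the approximation gives $A\le\tfrac{L^{2}}{4\pi}$ for all curves.

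For the equality case, suppose $A=\tfrac{L^{2}}{4\pi}$ with $\gamma$ convex. If $\gamma$ were not a circle, it would carry four points, in cyclic order on $\gamma$, that are not concyclic --- otherwise every point of $\gamma$ would lie on the circle through three fixed ones, so $\gamma$ would be that circle. Cut $\gamma$ at these four points, discard the quadrilateral they span, and reattach the four arcs rigidly to the sides of the cyclic quadrilateral with the same four side lengths; the latter exists by Theorem~\ref{thm:euc} and, being cyclic while the original is not, has strictly larger area by Bretschneider's formula (the $n=4$ instance of the maximality fact used above). The resulting closed curve has length $L$ and strictly larger enclosed area, which by the inequality already proved is at most $\tfrac{L^{2}}{4\pi}$ --- a contradiction. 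Hence $\gamma$ is a circle.

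I expect the main obstacle to be the ingredient used in both preceding paragraphs: that the cyclic polygon uniquely maximises area among polygons with prescribed edge lengths. The tempting proof --- cap each edge with the circular segment it would subtend in the circumscribed circle and apply the isoperimetric inequality --- is circular. A sound route is an inductive ``four-hinge'' argument: a non-cyclic polygon has four consecutive vertices that are not concyclic, and flexing them while holding the other vertices (hence all edge lengths, hence the area of the rest) fixed changes only the area of the quadrilateral they span, whose side lengths are unchanged; pushing that quadrilateral towards its cyclic shape strictly increases the area, the base case $n=4$ being Bretschneider's inequality. Alternatively this maximality is subsumed by the strict concavity underlying the variational principle of Section~\ref{sec:proof_euc}. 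The remaining points --- checking that the curves built by approximation and in the equality argument stay simple, so areas add, and the bookkeeping for merely rectifiable curves --- are routine but need some care.
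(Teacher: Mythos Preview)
The paper does not prove Theorem~\ref{thm:isoperimetric}. It is stated only as historical background in the introduction, alongside Theorems~\ref{thm:secant} and~\ref{thm:tangent}, to situate the paper's actual results (Theorems~\ref{thm:euc}--\ref{thm:hyp}) in context. There is therefore no proof in the paper to compare your proposal against.

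Your attempt is a reasonable sketch of the classical approximation-by-polygons route, and you correctly flag the main danger: the step ``among all polygons with prescribed edge lengths the cyclic one maximises area'' (Theorem~\ref{thm:secant}) is often proved by capping and invoking the isoperimetric inequality, which would be circular here. Your proposed fix via four-hinge reduction to the quadrilateral case and Bretschneider's formula~\eqref{eq:area_quad} is sound in spirit, but note that the paper does not prove Theorem~\ref{thm:secant} either; it only cites Moula, L'Huilier, and Steiner for it. In particular, your remark that ``this maximality is subsumed by the strict concavity underlying the variational principle of Section~\ref{sec:proof_euc}'' is not quite right: the functional $f_{\ell}$ of Section~\ref{sec:proof_euc} is not the area, and the concavity of $f_{\ell}$ on $D_{n}$ establishes only the uniqueness of the cyclic polygon with given side lengths, not that it maximises area among all polygons with those side lengths. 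So if you want a self-contained proof you still need an independent argument for Theorem~\ref{thm:secant}, and the four-hinge route plus~\eqref{eq:area_quad} is the honest one.
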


It is not clear who first stated the following theorem about polygons:

\begin{oldtheorem}[Secant Polygon]
  \label{thm:secant}
  Among all $n$-gons with given side lengths, only the one inscribed in a
  circle has the largest area.
\end{oldtheorem}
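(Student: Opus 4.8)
The plan is to prove the theorem by a compactness-plus-local-variation argument, reading ``largest area'' as a maximization over the moduli space of closed $n$-gons with the prescribed ordered side lengths. First I would fix the lengths $\ell_{1},\dots,\ell_{n}$ (which must satisfy the polygon inequalities~\eqref{eq:polygon} for any $n$-gon to exist) and consider the configuration space $\mathcal{M}$ of closed polygonal chains with these edge lengths in the given cyclic order, taken modulo orientation-preserving rigid motions. This space is compact: a configuration is determined by its $n$ edge directions subject to the single vector closing condition, so $\mathcal{M}$ is a closed and bounded subset of a torus. Since the signed (shoelace) area is continuous on $\mathcal{M}$, it attains a maximum at some polygon $P^{*}$, and because the polygon inequalities hold strictly the cyclic polygon of Theorem~\ref{thm:euc} already has positive area, so $P^{*}$ is non-degenerate.

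Next I would show that the maximizer $P^{*}$ is convex. If $P^{*}$ had a reflex vertex $V_{k}$, then reflecting $V_{k}$ across the line through its neighbours $V_{k-1}$ and $V_{k+1}$ would preserve both incident edge lengths while reversing the sign with which the triangle $V_{k-1}V_{k}V_{k+1}$ contributes to the area, strictly increasing the total signed area and contradicting optimality. Hence every vertex of $P^{*}$ turns in the same sense, so $P^{*}$ is convex and simple and its signed area coincides with its enclosed area. I expect the careful handling of these foundational points---compactness at the degenerate boundary of $\mathcal{M}$, the identification of signed area with enclosed area, and the exclusion of higher-winding configurations---to be the main technical obstacle, even though the conceptual core of the proof lies in the quadrilateral variation below.

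The heart of the argument is a local variation using a flexible quadrilateral. For $n=3$ every triangle is cyclic and there is nothing to prove, so assume $n\geq 4$ and pick four consecutive vertices $V_{k-1},V_{k},V_{k+1},V_{k+2}$ of $P^{*}$. Freezing all other vertices fixes the endpoints $V_{k-1},V_{k+2}$, hence the chord $|V_{k-1}V_{k+2}|$, while the three edge lengths $|V_{k-1}V_{k}|,|V_{k}V_{k+1}|,|V_{k+1}V_{k+2}|$ are prescribed; thus $V_{k}$ and $V_{k+1}$ range over the one-parameter flex of a four-bar linkage, and the total area varies only through the area of the quadrilateral $V_{k-1}V_{k}V_{k+1}V_{k+2}$. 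I would then invoke the key lemma that, among all quadrilaterals with four given ordered side lengths, the cyclic one has strictly largest area. This follows at once from Bretschneider's formula
\begin{equation*}
  K=\sqrt{(s-a)(s-b)(s-c)(s-d)-abcd\cos^{2}\!\theta}\,,
\end{equation*}
where $a,b,c,d$ are the sides, $s$ the semiperimeter and $2\theta$ the sum of a pair of opposite angles: the right-hand side is maximal exactly when $\cos^{2}\!\theta=0$, i.e.\ when the opposite angles are supplementary, which is precisely the condition that the quadrilateral be inscribed in a circle. Since $P^{*}$ is a maximizer, every such quadruple $V_{k-1}V_{k}V_{k+1}V_{k+2}$ must already be cyclic.

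Finally I would pass from ``each consecutive quadruple is concyclic'' to ``all vertices lie on one circle.'' The consecutive quadruples $\{V_{k-1},V_{k},V_{k+1},V_{k+2}\}$ and $\{V_{k},V_{k+1},V_{k+2},V_{k+3}\}$ share the three non-collinear points $V_{k},V_{k+1},V_{k+2}$, which determine a unique circle; hence all the quadruple-circles coincide and $P^{*}$ is a cyclic polygon. By Theorem~\ref{thm:euc} there is exactly one cyclic polygon with the given side lengths, so the maximizer is unique and equals that cyclic polygon, whence every non-cyclic $n$-gon with these side lengths has strictly smaller area. This establishes both the ``largest area'' and the ``only'' assertions of the theorem.
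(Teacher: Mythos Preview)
The paper does not supply its own proof of Theorem~\ref{thm:secant}; the theorem appears only in the historical survey of the introduction, where it is attributed to Moula, L'Huilier, and Steiner, and where Blaschke's observation is recorded that the general case follows from the quadrilateral case by Steiner's four-hinge method, the quadrilateral case itself following from the area formula~\eqref{eq:area_quad}.

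Your proposal is exactly an explicit write-up of this classical Blaschke--Steiner route: compactness produces a maximizer, the reflection trick forces convexity, the four-bar flex at four consecutive vertices reduces the question to quadrilaterals, Bretschneider's formula~\eqref{eq:area_quad} forces each such quadruple to be concyclic, and overlapping triples glue the local circles into a single circumcircle; uniqueness then comes from Theorem~\ref{thm:euc}. So your approach coincides with the one the paper cites from the literature, even though the paper itself neither claims nor carries out a proof. The argument is sound; the only delicate points are the degenerate-boundary issues you already flag, plus the non-collinearity of three consecutive vertices needed in the gluing step, which is automatic once each quadruple is known to lie on a genuine finite-radius circle.
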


This was proved by Moula~\cite{Moula1737}, by
L'Huilier~\cite{LHuilier1782} (who cites Moula), and by
Steiner~\cite{Steiner1842} (who cites L'Huilier). L'Huilier also
proved the following theorem:

\begin{oldtheorem}[Tangent Polygon]
  \label{thm:tangent}
  Among all convex $n$-gons with given angles, only the one
  circumscribed to a circle has the largest area when the perimeter is
  fixed and and smallest perimeter when the area is fixed.
\end{oldtheorem}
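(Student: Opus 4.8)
The plan is to recast the problem in the language of support functions, where it becomes a statement about a single quadratic form — essentially the two-dimensional case of Minkowski's inequality for mixed volumes. Fix the prescribed interior angles $\alpha_1,\dots,\alpha_n$ (to be realizable they must satisfy $0<\alpha_i<\pi$ and $\sum_i\alpha_i=(n-2)\pi$), equivalently the exterior angles $\beta_i=\pi-\alpha_i$ with $\sum_i\beta_i=2\pi$, and fix unit outward edge normals $N_1,\dots,N_n$ with consecutive normals enclosing the angle $\beta_i$. A convex $n$-gon with these edge directions is then determined by its support numbers $h=(h_1,\dots,h_n)$, the signed distances from a fixed origin to the lines carrying the edges; the admissible vectors form an open convex cone $\mathcal C\subset\R^n$, namely those $h$ for which every edge length is positive. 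A direct computation shows that the edge length $\ell_i$ is a linear function of $h$, so $\ell=Mh$ with $M$ symmetric and cyclic-tridiagonal ($M_{i,i-1}=1/\sin\beta_i$, $M_{i,i+1}=1/\sin\beta_{i+1}$, $M_{ii}=-(\cot\beta_i+\cot\beta_{i+1})$, indices mod $n$); consequently the perimeter $P(h)=\sum_i\ell_i(h)=(M\mathbf 1)^{\top}h$ is linear, the area $A(h)=\tfrac12\sum_i h_i\ell_i(h)=\tfrac12 h^{\top}Mh$ is a quadratic form, and the circumscribed polygon with incircle of radius $r$ centred at the origin is the constant vector $h=r\mathbf 1\in\mathcal C$.

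Writing a competing polygon of the same perimeter as $h=r\mathbf 1+v$, the equal-perimeter condition forces $v$ into the hyperplane $H=\{v:(M\mathbf 1)^{\top}v=0\}$, which — since $M$ is symmetric — is exactly the $M$-orthogonal complement of $\mathbf 1$. Expanding the quadratic form and discarding the cross term $(M\mathbf 1)^{\top}v=0$ gives $A(r\mathbf 1+v)=A(r\mathbf 1)+\tfrac12\,v^{\top}Mv$. So the area part of the theorem reduces to the assertion that $q(v):=v^{\top}Mv$ is negative semidefinite on $H$ with radical exactly $\ker M$. I would derive this from two facts: (i) $q$ has \emph{exactly one} positive eigenvalue, and (ii) $q(\mathbf 1)=2A(\mathbf 1)>0$ (clear, as $\mathbf 1\in\mathcal C$). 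Granting these, $H=\mathbf 1^{\perp_q}$ is the $q$-orthogonal complement of a $q$-positive vector in a form with a single positive direction, on which $q$ is automatically $\le 0$ with radical $\ker q=\ker M$; this is a routine reverse Cauchy--Schwarz computation. The conclusion is then immediate: $A(h)\le A(r\mathbf 1)$ for every competitor of the same perimeter, with equality only if $v\in\ker M$, i.e.\ $h$ is a translate of $r\mathbf 1$, i.e.\ the competitor is the circumscribed polygon.

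The main obstacle is fact (i), the signature of $M$, which I would settle by a deformation argument. First, $\operatorname{rank}M(\beta)=n-2$ for every admissible $\beta$: $Mh=0$ says the $n$ lines $\{\langle x,N_i\rangle=h_i\}$ satisfy $L_{i-1}\cap L_i=L_i\cap L_{i+1}$ for all $i$, hence all pass through a single point $p$, so $h_i=\langle p,N_i\rangle$ and $\ker M$ is precisely the $2$-dimensional space of translation vectors. On the connected set $\{\beta:\beta_i>0,\ \sum_i\beta_i=2\pi\}$ the matrix $M(\beta)$ thus has constant rank, so its inertia is locally — hence globally — constant, and it suffices to evaluate it at the regular $n$-gon, where $M$ is circulant with eigenvalues proportional to $\cos(2\pi k/n)-\cos(2\pi/n)$, $k=0,\dots,n-1$: one positive ($k=0$), two zero ($k=1,\,n-1$), and $n-3$ negative. (Alternatively one may simply quote Minkowski's mixed-area inequality $A(K,K')^2\ge A(K)A(K')$ with equality iff $K,K'$ are homothetic; applied with $K'$ the circumscribed polygon and using $A(K,K')=\tfrac r2 P(K)$ it yields $A(K)\le rP(K)^2/(2P(K'))$, hence $A(K)\le A(K')$ when $P(K)=P(K')$, with the same equality case — but that inequality is essentially as deep as the eigenvalue count.)

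Finally, the perimeter part follows from the area part by scaling: if a polygon $K$ with the prescribed angles had $\operatorname{Area}(K)=\operatorname{Area}(Q)$ and $P(K)\le P(Q)$ for the circumscribed polygon $Q$, then rescaling $K$ by $\lambda=P(Q)/P(K)\ge 1$ yields a polygon of perimeter $P(Q)$ and area $\lambda^{2}\operatorname{Area}(Q)\ge\operatorname{Area}(Q)$, which by the area part forces $\lambda=1$ and $K$ to be a translate of $Q$. No compactness is needed anywhere, and existence of the circumscribed polygon with the given angles is immediate — intersect the $n$ half-planes bounded by lines tangent to a circle with outward normals $N_1,\dots,N_n$; it closes up precisely because $\sum_i\beta_i=2\pi$.
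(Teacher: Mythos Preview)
The paper does not prove Theorem~C. It appears only in the historical survey at the end of Section~\ref{sec:intro}, attributed to L'Huilier (and, in the spherical version, to Steiner), and is never revisited. The paper's own work concerns Theorems~1--3, about polygons \emph{inscribed} in circles (and their spherical and hyperbolic analogues), and its variational principle lives in the central angles via Clausen's integral --- a completely different setup from the circumscribed problem. So there is no ``paper's own proof'' of this statement to compare against.

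For what it is worth, your argument is correct and is a classical route to the tangent-polygon theorem. Encoding polygons with fixed edge normals by their support vector $h$, so that the edge lengths are $\ell=Mh$ with $M$ symmetric, $P(h)=(M\mathbf 1)^{\top}h$ linear and $A(h)=\tfrac12 h^{\top}Mh$ quadratic, is exactly the discrete Brunn--Minkowski framework. Your identification of $\ker M$ with the two-dimensional space of translations is right (all edge lengths vanish iff the supporting lines are concurrent), and the deformation-of-inertia argument to the regular $n$-gon is sound: on the connected simplex $\{\beta_i>0,\ \sum\beta_i=2\pi\}$ the rank of $M(\beta)$ is constantly $n-2$, so the signature cannot jump, and your circulant computation at the regular point gives eigenvalues proportional to $\cos(2\pi k/n)-\cos(2\pi/n)$, hence signature $(1,2,n-3)$. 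The reverse Cauchy--Schwarz step on the $q$-orthogonal complement of the $q$-positive vector $\mathbf 1$ then finishes the area part, and your scaling reduction handles the perimeter part. As you note, this is the two-dimensional Minkowski mixed-area inequality in thin disguise.
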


Steiner also proves versions of Theorems~\ref{thm:secant}
and~\ref{thm:tangent} for spherical polygons. None of these authors
deemed it necessary to prove the existence of a maximizer, an issue
that became generally recognized only after
Weierstrass~\cite{Weierstrass_Werke_7}. For polygons, the existence of
a maximizer follows by a standard compactness argument.

Blaschke~\cite{Blaschke1916} ({\S} 12) notes that the quadrilateral
case ($n=4$) of Theorem~\ref{thm:secant} can easily be deduced from
the Isoperimetric Theorem~\ref{thm:isoperimetric} using Steiner's
four-hinge method. Conversely, one can similarly deduce
Theorem~\ref{thm:isoperimetric} and the general
Theorem~\ref{thm:secant} from the quadrilateral case of
Theorem~\ref{thm:secant}. He remarks that the quadrilateral case of
Theorem~\ref{thm:secant} can be proved directly by deriving the
following equation for the area $A$ of a quadrilateral with sides
$\ell_{k}$:
\begin{equation}
  \label{eq:area_quad}
  A^{2} = (s-\ell_{1})(s-\ell_{2})(s-\ell_{3})(s-\ell_{4})
  - \ell_{1}\ell_{2}\ell_{3}\ell_{4}\cos^{2}\theta,
\end{equation}
where $s=(\ell_{1}+\ell_{2}+\ell_{3}+\ell_{4})/2$ is half the
perimeter, and $\theta$ is the arithmetic mean of two opposite angles.

Neither Blaschke, nor Steiner, L'Huilier, or Moula provide an argument
for the uniqueness of the maximizer in Theorems~\ref{thm:secant}
or~\ref{thm:tangent}. It seems that even after Weierstrass, the fact
that the sides determine a cyclic polygon uniquely was considered too
obvious to deserve a proof.

Penner~\cite{Penner1987} (Theorem 6.2) gives a complete proof of
Theorem~\ref{thm:euc}. He proceeds by showing that there is one and
only one circumcircle radius that allows the construction of a
Euclidean cyclic polygon with given sides (provided they satisfy the
polygon inequalities). 

Schlenker~\cite{Schlenker2007} proves Theorems~\ref{thm:sph}
and~\ref{thm:hyp}, and also the isoperimetric property of
non-Euclidean cyclic polygons, i.e., the spherical and hyperbolic
versions of Theorem~\ref{thm:secant}. His proofs of the isoperimetric
property are based on the remarkable equation
\begin{equation}
  \label{eq:schlenker}
  \sum \dot\alpha_{i}v_{i}=0
\end{equation}
characterizing the change of angles $\alpha_{i}$ of a spherical or
hyperbolic polygon under infinitesimal deformations with fixed side
lengths. Here, $v_{i}\in\R^{3}$ are the position vectors of the
polygon's vertices in the sphere or in the hyperboloid,
respectively. To prove the uniqueness of spherical and hyperbolic
cyclic polygons with given sides he uses separate arguments similar
to Penner's.

\section{Euclidean polygons. Proof of Theorem~\ref{thm:euc}}
\label{sec:proof_euc}

To
construct an inscribed polygon with given side lengths $\ell=(\ell_{1},\ldots,\ell_n)\in\R_{>0}^n$ (see Figure~\ref{fig:eucl_polygon}) is
equivalent to finding a point $(\alpha_1,\ldots\alpha_{n})$
in the set 
\begin{equation}
D_n=\big\{\alpha \in \R_{>0}^n \;\big|\;\sum_{k=1}^n \alpha_k = 2\pi \big\}
\subset\R^{n}
\end{equation}
satisfying, for some $R\in\R$ and for all $k\in\{1,\ldots,n\}$,
\begin{equation}\label{eq:eucl_radius}
  \frac{\ell_k}{2}=R\sin\frac{\alpha_k}{2}.
\end{equation}

\begin{figure}
\labellist
\small\hair 2pt
 \pinlabel {$\ell_1$} [ ] at 23 89
 \pinlabel {$\ell_2$} [ ] at 31 21
 \pinlabel {$\dots$} [ ] at 77 55
 \pinlabel {$\ell_n$} [ ] at 92 100
 \pinlabel {$\alpha_1$} [ ] at 56 68
 \pinlabel {$\alpha_2$} [ ] at 56 53
 \pinlabel {$\alpha_n$} [ ] at 73 72
 \pinlabel {$R$} [ ] at 92 63
\endlabellist
\centering
\includegraphics[scale=1.0]{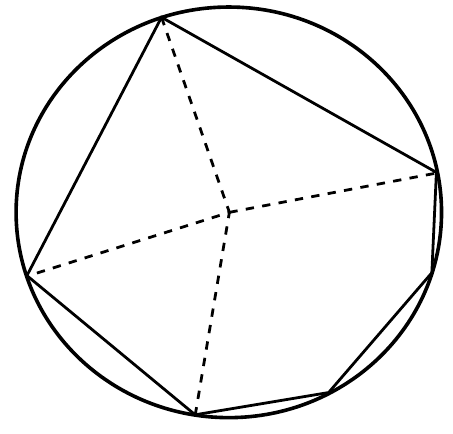}
\caption{Euclidean polygon inscribed in a circle}
\label{fig:eucl_polygon}
\end{figure}

This problem admits the following variational formulation. Define the
function $f_\ell:\R^n \to \R$ by
\begin{equation}
  \label{eq:f_ell}
  f_\ell(\alpha) = 
  \sum_{k=1}^n\big( \Cl(\alpha_k) +  \log(\ell_k)\,\alpha_k\big)
\end{equation}
where $\Cl$ denotes Clausen's integral~\cite{Lewin1981}:
\begin{equation}
  \Cl (x)= -\int_0^x \log\Big|2\sin \frac{t}{2}\Big|\,dt.
\end{equation}
Clausen's integral is closely related to Milnor's Lobachevsky function \cite{Milnor1982}:
\begin{equation*}
  \cL(x)=\frac{1}{2}\:\Cl(2x).
\end{equation*}
The function $\Cl:\R\rightarrow\R$ is continuous, $2\pi$-periodic, and
odd. It is differentiable except at integer multiples of $2\pi$ where
the graph has vertical tangents (see Figure~\ref{fig:cl2}).
\begin{figure}[h]
  \centering
  \includegraphics{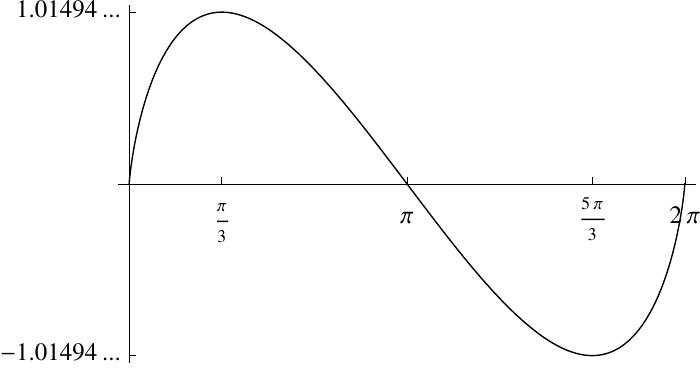}
  \caption{Graph of Clausen's integral $\Cl(x)$}
  \label{fig:cl2}
\end{figure}

\begin{proposition}[Variational Principle]\label{prop:radius_crit_pt}
  A point $\alpha \in D_n$ is a critical point of~$f_\ell$ restricted
  to~$D_{n}$ if and only if there exists an $R\in \R$ satisfying
  equations~\eqref{eq:eucl_radius}.
\end{proposition}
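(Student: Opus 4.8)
The plan is to compute the gradient of $f_\ell$ restricted to the affine hyperplane containing $D_n$ and identify when it vanishes. First I would observe that $\Cl'(x) = -\log|2\sin(x/2)|$ wherever this is defined, so that for $\alpha \in D_n$ (where each $\alpha_k \in (0,2\pi)$, so $\sin(\alpha_k/2) > 0$ and $f_\ell$ is differentiable) we have
\begin{equation*}
  \frac{\partial f_\ell}{\partial \alpha_k}(\alpha)
  = -\log\Bigl(2\sin\frac{\alpha_k}{2}\Bigr) + \log \ell_k
  = \log\frac{\ell_k}{2\sin(\alpha_k/2)}.
\end{equation*}
The tangent space to $D_n$ is $\{v \in \R^n \mid \sum_k v_k = 0\}$, so by the Lagrange multiplier criterion, $\alpha \in D_n$ is a critical point of $f_\ell|_{D_n}$ if and only if the gradient $\nabla f_\ell(\alpha)$ is orthogonal to this tangent space, i.e.\ if and only if all partial derivatives are equal to a common value $\lambda \in \R$.

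Next I would translate the condition ``$\partial f_\ell/\partial\alpha_k = \lambda$ for all $k$'' into the desired form. This reads $\log\frac{\ell_k}{2\sin(\alpha_k/2)} = \lambda$ for all $k$, equivalently $\frac{\ell_k}{2} = e^{\lambda}\sin\frac{\alpha_k}{2}$ for all $k$. Setting $R = e^\lambda$ (which is automatically positive, though the proposition only claims $R \in \R$) gives exactly equations~\eqref{eq:eucl_radius}. Conversely, if such an $R$ exists, then since $\ell_k > 0$ and $\sin(\alpha_k/2) > 0$ we must have $R > 0$, and running the computation backwards shows all partials equal $\log R$, so $\alpha$ is a critical point. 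This establishes the equivalence in both directions.

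The only point requiring genuine care is the differentiability of $f_\ell$ on $D_n$ and the justification of $\Cl'(x) = -\log|2\sin(x/2)|$: Clausen's integral has vertical tangents at integer multiples of $2\pi$, but every $\alpha \in D_n$ has all coordinates strictly between $0$ and $2\pi$ (they are positive and sum to $2\pi$ with $n \geq 3 \geq 2$ terms, or even $n \geq 2$ suffices), so each $\alpha_k$ avoids the bad set and $f_\ell$ is smooth in a neighborhood of $D_n$ within its affine hull. I expect this to be routine rather than a real obstacle; the substantive content of the proposition is simply the Lagrange-multiplier bookkeeping and the observation that the multiplier $\lambda$ can be exponentiated to produce the circumradius $R$. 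I would close by remarking that this identification of $e^\lambda$ with $R$ is what makes the variational principle geometrically meaningful and sets up the uniqueness argument via concavity of $f_\ell|_{D_n}$ in the subsequent section.
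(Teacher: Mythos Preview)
Your proof is correct and follows essentially the same approach as the paper: both compute the partial derivatives of $f_\ell$, apply the Lagrange multiplier criterion for the constraint $\sum_k \alpha_k = 2\pi$, and identify the multiplier with $\log R$. The paper writes the multiplier as $\log R$ from the outset, while you introduce $\lambda$ and then set $R=e^{\lambda}$, but this is only a cosmetic difference; your version is somewhat more explicit about differentiability on $D_n$ and about the converse direction.
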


\begin{proof}
  A point $\alpha \in D_n$ is a critical point of $f_\ell$ restricted
  to $D_n$ if and only if there exists a Lagrange multiplier $\log R$
  such that $\nabla f_\ell(\alpha)=(\log R)\nabla g(\alpha)$ for the
  constraint function $g(\alpha)=\sum\alpha_k$, i.e.,
 \begin{equation*}
  \begin{pmatrix}
   -\log \left|2\sin \frac{\alpha_1}{2}\right| + \log \ell_1\\
   \vdots \\
   -\log \left|2\sin \frac{\alpha_n}{2}\right| + \log \ell_n
  \end{pmatrix}
  =
  \log R
  \begin{bmatrix}
   1\\
   \vdots \\
   1 
  \end{bmatrix}.
 \end{equation*}
Since $0<\alpha_k<2\pi$ we may omit the absolute value signs,
obtaining equations~\eqref{eq:eucl_radius}.
\end{proof}

Thus, to prove Theorem~\ref{thm:euc}, we need to show that $f_\ell$
has a critical point in $D_n$ if and only if the polygon
inequalities~\eqref{eq:polygon} are satisfied, and that this critical
point is then unique. The following proposition and corollary deal
with the uniqueness claim.

\begin{proposition}\label{prop:concavity}
 The function $f_\ell$ is strictly concave on~$D_n$.
\end{proposition}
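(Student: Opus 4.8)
The plan is to reduce strict concavity to the negative-definiteness of a single Hessian restricted to the tangent space of $D_n$, and then to an elementary inequality for the tangent function.

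First, the summand $\sum_k\log(\ell_k)\,\alpha_k$ in \eqref{eq:f_ell} is affine and hence irrelevant for concavity, so it suffices to prove that $h(\alpha):=\sum_{k=1}^n\Cl(\alpha_k)$ is strictly concave on $D_n$. On the interval $(0,2\pi)$, which contains every coordinate of a point of $D_n$, Clausen's integral is smooth with $\Cl'(x)=-\log(2\sin\tfrac{x}{2})$ and $\Cl''(x)=-\tfrac12\cot\tfrac{x}{2}$; thus $h$ is smooth on $D_n$ with diagonal Hessian $\mathrm{diag}(\Cl''(\alpha_1),\dots,\Cl''(\alpha_n))$. Since $D_n$ is open in the affine hyperplane $\{\sum_k\alpha_k=2\pi\}$, whose tangent space is $T=\{v\in\R^n:\sum_kv_k=0\}$, the strict concavity of $h|_{D_n}$ will follow once we verify that at every $\alpha\in D_n$ the Hessian is negative definite on $T$, that is,
\begin{equation*}
  Q(v):=\sum_{k=1}^n\cot\tfrac{\alpha_k}{2}\,v_k^2>0\qquad\text{for every }v\in T\setminus\{0\}.
\end{equation*}

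The decisive structural remark is that, since $\sum_k\alpha_k=2\pi$ and $n\ge3$, at most one of the angles $\alpha_k$ can be $\ge\pi$ — two such would already account for the full $2\pi$, leaving nothing for the remaining $n-2\ge1$ positive coordinates. If no $\alpha_k$ exceeds $\pi$, then every coefficient $\cot\tfrac{\alpha_k}{2}$ is nonnegative and at most one of them vanishes; a nonzero $v\in T$ necessarily has at least two nonzero entries, hence, as at most one index carries a zero coefficient, a nonzero entry at an index with positive coefficient, so $Q(v)>0$. Otherwise exactly one coordinate, say $\alpha_n$, satisfies $\alpha_n>\pi$; put $c_n:=\cot\tfrac{\alpha_n}{2}<0$ and $c_k:=\cot\tfrac{\alpha_k}{2}>0$ for $k<n$, and for $v\in T\setminus\{0\}$ use $v_n=-\sum_{k<n}v_k$. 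If $v_n=0$ then $Q(v)=\sum_{k<n}c_kv_k^2>0$ as before, so assume $v_n\ne0$. By the Cauchy--Schwarz inequality, $v_n^2=\bigl(\sum_{k<n}v_k\bigr)^2\le\bigl(\sum_{k<n}c_kv_k^2\bigr)\bigl(\sum_{k<n}1/c_k\bigr)$, whence
\begin{equation*}
  Q(v)\ \ge\ v_n^2\left(\Bigl(\sum_{k<n}\tfrac1{c_k}\Bigr)^{-1}+c_n\right),
\end{equation*}
and it remains to show $\sum_{k<n}\tfrac1{c_k}<\tfrac1{|c_n|}$.

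This is the one step carrying real content. Setting $\beta_k:=\tfrac{\alpha_k}{2}\in(0,\tfrac\pi2)$ for $k<n$, we have $1/c_k=\tan\beta_k$, while $1/|c_n|=-\tan\tfrac{\alpha_n}{2}=\tan(\pi-\tfrac{\alpha_n}{2})$ because $\tfrac{\alpha_n}{2}\in(\tfrac\pi2,\pi)$; and the constraint gives $\sum_{k<n}\beta_k=\pi-\tfrac{\alpha_n}{2}$, which lies in $(0,\tfrac\pi2)$ exactly because $\alpha_n>\pi$. Hence the inequality to prove is the strict superadditivity of the tangent,
\begin{equation*}
  \sum_{k<n}\tan\beta_k\ <\ \tan\Bigl(\sum_{k<n}\beta_k\Bigr),
\end{equation*}
for positive $\beta_k$ with $\sum_{k<n}\beta_k<\tfrac\pi2$ (there being $n-1\ge2$ of them). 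I would prove this by induction on the number of summands: the two-summand case is immediate from $\tan(\beta+\beta')=(\tan\beta+\tan\beta')/(1-\tan\beta\tan\beta')$ together with $0<\tan\beta\tan\beta'<1$ when $\beta+\beta'<\tfrac\pi2$, and the inductive step applies this after grouping all but one summand. I expect the main obstacle to be precisely this reduction, through the linear constraint, to a single ``large'' central angle: once that is in place, the lone negative eigendirection of the Hessian is controlled by Cauchy--Schwarz and the tangent inequality, and everything else is bookkeeping.
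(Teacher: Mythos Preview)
Your argument is correct and self-contained, but it takes a genuinely different route from the paper. The paper argues by induction on $n$: it cites Rivin's theorem that $V_3=\sum_{k=1}^3\Cl(\alpha_k)$ is strictly concave on $D_3$ as a black-box base case, and for the inductive step uses the oddness and $2\pi$-periodicity of $\Cl$ to write
\[
V_n(\alpha_1,\dots,\alpha_n)=V_{n-1}(\alpha_1,\dots,\alpha_{n-1}+\alpha_n)+V_3\Bigl(\textstyle\sum_{k\le n-2}\alpha_k,\alpha_{n-1},\alpha_n\Bigr),
\]
so that strict concavity descends from $V_{n-1}$ and $V_3$. You instead compute the Hessian explicitly, observe that on $D_n$ at most one of the coefficients $\cot(\alpha_k/2)$ can be nonpositive, and control that lone bad direction by Cauchy--Schwarz together with the strict superadditivity $\sum\tan\beta_k<\tan\sum\beta_k$ for positive $\beta_k$ with sum in $(0,\tfrac\pi2)$. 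Your approach has the advantage of being entirely elementary and of proving Rivin's $n=3$ case along the way (it is exactly your two-term tangent inequality); the paper's approach avoids the case split and the quadratic-form estimate at the cost of importing that base case from the literature, but gains a cleaner ``cut off a triangle'' picture tied to the hyperbolic-volume interpretation in the paper's Remark~\ref{rem:hyperbolic_volume}.
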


\begin{corollary}
  If $f_\ell$ has a critical point in $D_n$, it is the unique
  maximizer of $f_{\ell}$ in the closure 
  $
  \bar D_n=\{\alpha \in \R_{\geq 0}^n \;|\;\sum\alpha_k = 2\pi \}.
  $
\end{corollary}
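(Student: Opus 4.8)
The corollary follows by combining Proposition~\ref{prop:concavity} with standard facts about concave functions, but one must be careful because $f_\ell$ is not defined in a neighborhood of $\bar D_n$ in the naive sense: it is defined on all of $\R^n$, but Clausen's integral has vertical tangents at integer multiples of $2\pi$, so $f_\ell$ is continuous on $\bar D_n$ but fails to be differentiable at the boundary points where some $\alpha_k\in\{0,2\pi\}$. This is actually helpful rather than harmful.

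The plan is as follows. First I would record that $f_\ell$ is continuous on the compact set $\bar D_n$ (Clausen's integral is continuous on $\R$), hence attains a maximum there. Next, I would argue that any maximizer lies in the open simplex $D_n$: at a boundary point some $\alpha_k=0$ (and then some other $\alpha_j>0$), and I would show that moving mass from $\alpha_j$ to $\alpha_k$ strictly increases $f_\ell$. Concretely, along the segment $t\mapsto \alpha + t(e_k-e_j)$ the derivative of $f_\ell$ as $t\to 0^+$ is $-\log|2\sin\frac{\alpha_k}{2}|+\log\ell_k-(-\log|2\sin\frac{\alpha_j}{2}|+\log\ell_j)$, and since $\Cl'(x)=-\log|2\sin\frac{x}{2}|\to+\infty$ as $x\to 0^+$, this one-sided derivative is $+\infty$. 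Hence no boundary point can be a maximizer, so every maximizer lies in $D_n$. A maximizer in the open set $D_n$ is in particular a critical point of $f_\ell$ restricted to $D_n$.

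Then I would invoke strict concavity (Proposition~\ref{prop:concavity}): a strictly concave function on the convex set $\bar D_n$ has at most one maximizer, and on the relatively open convex set $D_n$ every critical point is automatically the global maximizer over $D_n$ (a critical point of a concave function is a global maximum). Combining these: if $f_\ell$ has a critical point $\alpha^\ast\in D_n$, then $\alpha^\ast$ maximizes $f_\ell$ over $D_n$; since the maximum over $\bar D_n$ is attained in $D_n$, $\alpha^\ast$ maximizes $f_\ell$ over all of $\bar D_n$; and strict concavity on $\bar D_n$ forces this maximizer to be unique. This also shows, as a bonus consistent with the surrounding discussion, that a critical point in $D_n$ is unique.

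The only genuinely delicate point is the boundary analysis — making precise that $f_\ell$ cannot be maximized on $\partial\bar D_n$ because of the vertical tangents of $\Cl$. Everything else is the textbook fact that a strictly concave function on a convex set has a unique maximizer and that interior critical points of concave functions are global maxima (see, e.g., any standard reference on convex analysis). I would phrase the boundary estimate intrinsically along edges of the simplex to avoid worrying about differentiability of $f_\ell$ transverse to $\bar D_n$, since only the directional derivatives within $\bar D_n$ matter.
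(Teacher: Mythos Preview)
Your proof does more than necessary and has a real gap in the boundary step. The paper treats the corollary as immediate from Proposition~\ref{prop:concavity}: since $f_\ell$ is continuous on $\bar D_n$ and strictly concave on $D_n$, it is concave on $\bar D_n$; if $\alpha^\ast\in D_n$ is critical, the gradient inequality for concave functions gives $f_\ell(\beta)\leq f_\ell(\alpha^\ast)$ for every $\beta\in\bar D_n$, and equality for some $\beta\neq\alpha^\ast$ would force $f_\ell$ to be constant on the segment from $\alpha^\ast$ to $\beta$, whose relative interior lies in $D_n$, contradicting strict concavity there. No boundary analysis is needed.

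Your boundary argument, by contrast, tries to show \emph{unconditionally} that no point of $\partial\bar D_n$ can maximize $f_\ell$. This breaks down at the vertices $2\pi e_k$: there the only positive coordinate is $\alpha_k=2\pi$, and your one-sided derivative along $e_i-e_k$ becomes $\Cl'(0^+)-\Cl'(2\pi^-)+\text{finite}$, the indeterminate form $+\infty-\infty$ (note $\Cl'(x)=-\log\big|2\sin\frac{x}{2}\big|\to+\infty$ also as $x\to 2\pi^-$). In fact the unconditional claim is false: the paper's Proposition~\ref{prop:max_vertex} shows that $f_\ell$ \emph{does} attain its maximum at the vertex $2\pi e_k$ precisely when the $k$th polygon inequality~\eqref{eq:polygon} fails. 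So ``any maximizer lies in $D_n$'' cannot be established without the hypothesis of the corollary---and once you use that hypothesis, the direct concavity argument above already delivers the conclusion. A smaller point: your final appeal to ``strict concavity on $\bar D_n$'' is also not quite right, since $f_\ell$ is merely affine along the edges of the simplex (for instance $V_3\equiv 0$ on $\partial\bar D_3$, so $f_\ell$ is linear there); uniqueness should be argued via strict concavity on $D_n$ as above.
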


\noindent%
This proves the uniqueness claim of Theorem~\ref{thm:euc}.

\begin{proof}[Proof of Proposition~\ref{prop:concavity}]
  We will show that 
  \begin{equation}
    \label{eq:V_n}
    V_n(\alpha)=\sum_{k=1}^n \Cl(\alpha_k)
  \end{equation}
  is strictly concave on $D_{n}$. Since $V_{n}$ differs from
  $f_\ell$ by a linear function, this is equivalent to
  the claim.
  
  Rivin~\cite{Rivin1994} (Theorem 2.1) showed that $V_{3}$ is strictly
  concave on $D_{3}$. For $n>3$ we 
  proceed by induction on $n$ 
  by ``cutting off a triangle'': First, note the obvious identity
  \begin{multline*}
  V_n(\alpha_1,\dots, \alpha_n)= V_{n-1}(\alpha_1,\dots, \alpha_{n-1}+\alpha_n)
  -\Cl(\alpha_{n-1}+\alpha_n)\\+\Cl(\alpha_{n-1})+\Cl(\alpha_n).
 \end{multline*}
 Since Clausen's integral is $2\pi$-periodic and odd,
 \begin{align*}
   -\Cl(\alpha_{n-1}+\alpha_n)=\Cl(2\pi-\alpha_{n-1}-\alpha_n)=\Cl\left(\sum_{k=1}^{n-2}\alpha_k\right),
 \end{align*}
 so
 \begin{equation*}
   V_n(\alpha_1,\dots, \alpha_n)=V_{n-1}(\alpha_1,\dots, \alpha_{n-1}+\alpha_n)+V_3\left(\sum_{k=1}^{n-2}\alpha_k,\alpha_{n-1},\alpha_n \right).
 \end{equation*}
 Hence, if $V_{n-1}$ and $V_3$ are strictly concave on
 $D_{n-1}$ and $D_3$, respectively, the claim for $V_n$ follows.
\end{proof}

Since $f_{\ell}$ attains its maximum on the
compact set $\bar D_{n}$, it remains to show that the maximum is
attained in $D_{n}$ if and only if the polygon
inequalities~\eqref{eq:polygon} are satisfied. This is achieved by the
following Propositions~\ref{prop:max_boundary}
and~\ref{prop:max_vertex}.

Note that $\bar D_{n}$ is an $(n-1)$-dimensional simplex in $\R^{n}$.
Its vertices are the points $2\pi e_{1},\ldots,2\pi e_{n}$, where
$e_{k}$ are the canonical basis vectors of $\R^{n}$. The relative
boundary of the simplex $\bar D_n$ is 
\begin{equation}
  \label{eq:boundary}
  \partial \bar D_{n}=\{\alpha\in\bar D_{n}\;|\;\alpha_{k}=0 \text{
    for at least one } k\}.
\end{equation}

\begin{proposition}
  \label{prop:max_boundary}
  If the function $f_\ell$ attains its maximum on the
  simplex~$\bar D_{n}$ at a boundary point $\alpha\in\partial\bar
  D_{n}$, then $\alpha$ is a vertex.
\end{proposition}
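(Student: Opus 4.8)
The plan is to analyze the restriction of $f_\ell$ to a face of $\bar D_n$ and show that if the maximum lies on the relative interior of a proper face, we reach a contradiction with maximality. Suppose $\alpha \in \partial \bar D_n$ is a maximum point that is not a vertex. Then at least one coordinate vanishes, say after relabeling $\alpha_{m+1} = \cdots = \alpha_n = 0$ with $2 \le m \le n-1$ (here $m \ge 2$ because $\alpha$ is not a vertex, so at least two coordinates are positive), and $\alpha_1, \ldots, \alpha_m > 0$ with $\sum_{k=1}^m \alpha_k = 2\pi$. The idea is to perturb by decreasing one of the positive coordinates slightly and increasing one of the zero coordinates by the same amount, keeping the sum fixed, and to show that $f_\ell$ strictly increases — contradicting the assumption that $\alpha$ is a maximum.

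First I would pick indices $i \le m$ (so $\alpha_i > 0$) and $j > m$ (so $\alpha_j = 0$), and consider the one-parameter family $\alpha(t)$ obtained from $\alpha$ by replacing $\alpha_i$ with $\alpha_i - t$ and $\alpha_j = 0$ with $t$, for small $t > 0$. Since $f_\ell$ differs from $V_n$ by a linear function and the perturbation keeps $\sum \alpha_k = 2\pi$ fixed, the relevant change is
\begin{equation*}
  f_\ell(\alpha(t)) - f_\ell(\alpha) = \big(\Cl(\alpha_i - t) - \Cl(\alpha_i)\big) + \big(\Cl(t) - \Cl(0)\big) + \big(\log \ell_j - \log \ell_i\big)\, t.
\end{equation*}
The first bracketed term and the linear term are $O(t)$ as $t \to 0^+$, since $\Cl$ is differentiable at the interior point $\alpha_i \in (0, 2\pi)$. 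The key point is the behavior of $\Cl(t) - \Cl(0) = \Cl(t)$ near $t = 0^+$: because $\Cl'(t) = -\log|2\sin(t/2)| \to +\infty$ as $t \to 0^+$, the function $\Cl(t)$ has a vertical tangent at the origin, so $\Cl(t) / t \to +\infty$. Hence for sufficiently small $t > 0$ the term $\Cl(t)$ dominates the $O(t)$ contributions, and $f_\ell(\alpha(t)) - f_\ell(\alpha) > 0$. This contradicts maximality, so $\alpha$ must be a vertex.

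The main obstacle, and the one point that needs care, is making the estimate on $\Cl(t)$ near zero precise: one should verify directly from the definition $\Cl(t) = -\int_0^t \log|2\sin(s/2)|\,ds$ that $\Cl(t) > 0$ for small $t > 0$ and in fact grows faster than any linear function, i.e. $\lim_{t\to 0^+} \Cl(t)/t = +\infty$. This follows because $-\log|2\sin(s/2)| \sim -\log s \to +\infty$ as $s \to 0^+$, so the integrand is eventually larger than any constant, forcing $\Cl(t) \ge Ct$ for every $C$ once $t$ is small enough. Everything else is routine: the differentiability of $\Cl$ away from multiples of $2\pi$ (already stated in the excerpt) guarantees the other terms are genuinely $O(t)$, and the constraint $\sum_{k=1}^m \alpha_k = 2\pi$ together with $m \ge 3$ — or a direct check in the degenerate case $m = 2$, where $\alpha_i \in \{\alpha_1, \alpha_2\}$ might equal a value where we must ensure $\alpha_i - t$ stays in $(0,2\pi)$ — poses no real difficulty since $t$ is taken arbitrarily small. (When $m = 2$ we simply need $\alpha_i < 2\pi$, which holds as the other positive coordinate is also positive.)
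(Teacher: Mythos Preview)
Your proof is correct and takes essentially the same approach as the paper: both arguments exploit the fact that $\Cl'(x)\to+\infty$ as $x\to 0^{+}$ to show that moving from a non-vertex boundary point into the simplex strictly increases $f_{\ell}$. The only difference is cosmetic---the paper phrases this as the directional derivative in any inward direction being $+\infty$, while you pick a specific two-coordinate perturbation and estimate the increment directly; your treatment is in fact slightly more explicit about why the $O(t)$ terms are dominated.
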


\begin{proof}
  Suppose $\alpha\in\partial\bar D_{n}$ is not a vertex. We need to
  show that $f_{\ell}$ does not attain its maximum at $\alpha$. This
  follows from the fact that the derivative of $f_{\ell}$ in a direction
  pointing towards $D_{n}$ is $+\infty$. 

  Indeed, suppose $v\in\R_{\geq 0}^{n}$, $\sum_{k}v_{k}=0$ and
  $v_{k}>0$ if $\alpha_{k}=0$. Then $\alpha+tv\in D_{n}$ for
  small enough~$t>0$, and because $\lim_{x\rightarrow 0}\Cl'(x)=+\infty$,
  \begin{equation}
    \label{eq:lim}
    \lim_{t\rightarrow 0}\;\frac{d}{dt}\,f_{\ell}(\alpha+tv)=+\infty.
  \end{equation}
  Hence $f_{\ell}(\alpha+tv)>f_{\ell}(\alpha)$ for small enough $t>0$.
\end{proof}

\begin{proposition}
  \label{prop:max_vertex}
  The function $f_\ell$ attains its maximum on $\bar D_{n}$ at a vertex 
  $2\pi e_{k}$
  if and only if 
  \begin{equation}
    \label{eq:polygon_violated}
    \ell_{k}\geq\sum_{\substack{i=1\\i\not=k}}^{n}\ell_{i}\,.
  \end{equation}
\end{proposition}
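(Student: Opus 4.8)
The plan is to combine the concavity of $f_\ell$ with an explicit computation of the one-sided directional derivatives of $f_\ell$ at the vertex $2\pi e_k$. First I would record, using $\Cl(0)=\Cl(2\pi)=0$, that $f_\ell(2\pi e_k)=2\pi\log\ell_k$. Since $f_\ell$ is concave on $\bar D_n$ by Proposition~\ref{prop:concavity}, the function $t\mapsto f_\ell(\beta(t))$, where $\beta(t):=(1-t)\,2\pi e_k+t\alpha$, is concave on $[0,1]$ for every $\alpha\in\bar D_n$, so its right derivative $D_\alpha$ at $t=0$ exists (a priori in $\R\cup\{+\infty\}$). Standard convex analysis then gives: $f_\ell$ attains its maximum on $\bar D_n$ at $2\pi e_k$ if and only if $D_\alpha\le 0$ for every $\alpha\in\bar D_n$ (for the nontrivial implication, a non-positive right derivative together with concavity forces $t\mapsto f_\ell(\beta(t))$ to be non-increasing, hence $f_\ell(\alpha)\le f_\ell(2\pi e_k)$).

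The core of the argument is the evaluation of $D_\alpha$. From $\beta(t)=(1-t)\,2\pi e_k+t\alpha$ one gets $\beta_k(t)=2\pi-ts$ with $s:=2\pi-\alpha_k=\sum_{i\ne k}\alpha_i$ and $\beta_i(t)=t\alpha_i$ for $i\ne k$. Differentiating $f_\ell(\beta(t))$ and using $\Cl'(x)=-\log\lvert 2\sin\tfrac{x}{2}\rvert$, the contribution of the $k$-th coordinate is $-s\,\Cl'(\beta_k)-s\log\ell_k$, and near $t=0$ one has $-s\,\Cl'(\beta_k)=s\log\bigl(2\sin\tfrac{ts}{2}\bigr)\sim s\log(ts)$ because $\sin\tfrac{\beta_k}{2}=\sin\tfrac{ts}{2}$; meanwhile each coordinate $i\ne k$ with $\alpha_i>0$ contributes $\alpha_i\Cl'(\beta_i)+\alpha_i\log\ell_i$ with $\alpha_i\Cl'(\beta_i)\sim-\alpha_i\log(t\alpha_i)$. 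The singular parts are $s\log t$ from the $k$-th coordinate and $-\bigl(\sum_{i\ne k}\alpha_i\bigr)\log t=-s\log t$ from the rest, so they cancel exactly, leaving the finite expression
\begin{equation*}
  D_\alpha = s\log s - \sum_{i\ne k}\alpha_i\log\alpha_i - s\log\ell_k + \sum_{i\ne k}\alpha_i\log\ell_i,
\end{equation*}
where the convention $0\log 0=0$ takes care of directions $\alpha$ with vanishing coordinates (the corresponding summand is identically $0$ along the segment, so no separate case analysis is needed).

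Finally, for $s>0$ I would substitute $\alpha_i=s\,p_i$, where $p=(p_i)_{i\ne k}$ is a probability distribution; the $s\log s$ terms cancel and $D_\alpha=s\bigl(\sum_{i\ne k}p_i\log\tfrac{\ell_i}{p_i}-\log\ell_k\bigr)$. Setting $L:=\sum_{i\ne k}\ell_i$ and $q_i:=\ell_i/L$, one rewrites $\sum_{i\ne k}p_i\log\tfrac{\ell_i}{p_i}=\log L-\sum_{i\ne k}p_i\log\tfrac{p_i}{q_i}$, which by the Information Inequality is at most $\log L$, with equality exactly when $p=q$. Hence $D_\alpha\le 0$ for all $\alpha\in\bar D_n$ — the remaining case $s=0$ being the trivial direction $\alpha=2\pi e_k$, where $D_\alpha=0$ — if and only if $\log L\le\log\ell_k$, i.e.\ if and only if $\ell_k\ge\sum_{i\ne k}\ell_i$, which is~\eqref{eq:polygon_violated}. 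Together with the reduction in the first paragraph, this proves the proposition. I expect the main obstacle to be the limit computation for $D_\alpha$: one must control the logarithmic singularities of $\Cl'$ at both endpoints $0$ and $2\pi$ precisely enough to see that they annihilate each other rather than one of them dominating, and then confirm that the leftover terms combine into the clean form above.
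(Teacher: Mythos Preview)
Your argument is correct and follows essentially the same route as the paper: both compute the one-sided directional derivative of $f_\ell$ at the vertex $2\pi e_k$, observe that the logarithmic singularities of $\Cl'$ at $0$ and $2\pi$ cancel because the direction components sum to zero, rewrite the resulting expression via the Information Inequality, and then invoke concavity. The only differences are cosmetic (you parametrize by convex combinations $\beta(t)=(1-t)\,2\pi e_k+t\alpha$ and differentiate $\Cl$ directly, whereas the paper scales the direction so that $\sum_{i\ne k}v_i=1$ and uses the asymptotic expansion $\Cl(x)=-x\log|x|+x+o(x)$), and your formula $D_\alpha=s\bigl(\sum_{i\ne k}p_i\log\tfrac{\ell_i}{p_i}-\log\ell_k\bigr)$ reduces exactly to the paper's expression once one sets $p_i=v_i$.
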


\begin{proof}
  By symmetry, it is enough to consider the case $k=n$, i.e., to show
  that the function $f_\ell$ attains its maximum on $\bar D_{n}$ at the
  vertex $(0,\ldots,0,2\pi)$ if and only if
  $\ell_{n}\geq\sum_{k=1}^{n-1}\ell_{k}$. To this end, we will
  calculate the directional derivative of $f_{\ell}$ in directions
  $v\in\R^{n}$ pointing inside $D_{n}$, i.e., satisfying
  \begin{equation*}
    v_k \geq 0\quad\text{for}\quad k\in \{1,\dots,n-1\},
    \quad 
    v_{n}=-\sum_{k=1}^{n-1}v_{k}<0.
  \end{equation*}
  Since we are only interested in the sign, we may assume $v$ to be
  scaled so that 
  \begin{equation*}
    \sum_{k=1}^{n-1}v_{k}=1, \qquad v_{n}=-1.
  \end{equation*}
  Clausen's integral has the asymptotic behavior
  \begin{equation}
    \label{eq:cl_asymp}
    \Cl(x)= -x \log|x|+x+o(x)\quad\text{as}\quad x\rightarrow 0.
  \end{equation}
  This can be seen by considering
  \begin{equation*}\textstyle
    \Cl(x)=-\int_0^x \log\left|(2\sin \frac{t}{2})/t\right|\,dt
    -\int_0^x \log|t|\,dt.
  \end{equation*}
  Using~\eqref{eq:cl_asymp} and the $2\pi$-periodicity of Clausen's
  integral, one obtains
  \begin{equation*}
    \begin{split}
      f_{\ell}(2\pi e_{n}+tv)-f_{\ell}(2\pi e_{n}) 
      &=
      \sum_{k=1}^{n}\big(-tv_{k}\log|v_{k}|+tv_{k}\log\ell_{k}\big) + o(t)\\
      &=
      -\sum_{k=1}^{n-1}tv_{k}\log\frac{v_{k}}{\ell_{k}}-t\log\ell_{n}+o(t),
    \end{split}
  \end{equation*}
  and hence
  \begin{equation*}
    \frac{d}{dt}\Big|_{t=0}f(2\pi e_{n}+tv)=
    -\sum_{k=1}^{n-1}v_{k}\log\frac{v_{k}}{\ell_{k}}-\log\ell_{n}.
  \end{equation*}

  Now we invoke the information inequality~\eqref{eq:information} for
  the discrete probability distributions $(v_1,\dots,v_{n-1})$ and
  $(\ell_1,\dots,\ell_{n-1})/\sum_{k=1}^{n-1} \ell_k$.  Thus,
  \begin{equation*}
    \frac{d}{dt}\Big|_{t=0}f(2\pi e_{n}+tv)
    =
    \underbrace{-\sum_{k=1}^{n-1} v_k \log\left(\frac{v_k}{\ell_k/\sum_{m=1}^{n-1} \ell_m}\right)}_{\leq 0}
    +\log \left(\frac{\sum_{k=1}^{n-1} \ell_k}{\ell_n}\right)
  \end{equation*}

  If $\ell_n \geq \sum_{k=1}^{n-1} \ell_k$, then 
  \begin{equation*}
    \frac{d}{dt}\Big|_{t=0}f(2\pi e_{n}+tv)\leq 0.
  \end{equation*}
  With the concavity of $f_{\ell}$ (Proposition~\ref{prop:concavity}),
  this implies that $f_\ell$ attains its maximum on $\bar D_{n}$ at $(0,
  \ldots, 0, 2\pi)$.
  
  If, on the other hand, $\ell_n < \sum_{k=1}^{n-1} \ell_k$, then we
  obtain, for $v_k=\ell_k/\sum_{m=1}^{n-1} \ell_m$, 
  \begin{equation*}
    \frac{d}{dt}\Big|_{t=0}f(2\pi e_{n}+tv) > 0.
  \end{equation*}
  This implies that $f_\ell$ does not attain its maximum at $(0,
  \ldots, 0, 2\pi)$.
\end{proof}

\noindent%
This completes the proof of Theorem~\ref{thm:euc}.

\begin{remark}
  \label{rem:hyperbolic_volume}
  The function $V_{n}$ has the following interpretation in terms of
  hyperbolic volume~\cite{Milnor1982}. Consider a Euclidean cyclic
  $n$-gon with central angles $\alpha_{1},\ldots,\alpha_{n}$. Imagine
  the Euclidean plane of the polygon to be the ideal boundary of
  hyperbolic $3$-space in the Poincar\'e upper half-space model. Then
  the vertical planes through the edges of the polygon and the
  hemisphere above its circumcircle bound a hyperbolic pyramid with
  vertices at infinity. Its volume is
  $\frac{1}{2}V_{n}(\alpha_{1},\ldots,\alpha_{n})$. Together with Schl\"afli's
  differential volume equation (rather, Milnor's generalization that
  allows for ideal vertices~\cite{milnor_schlfli_1994}), this provides
  another way to prove Proposition~\ref{prop:radius_crit_pt}.
\end{remark}

\section{Spherical polygons. Proof of Theorem~\ref{thm:sph}}
\label{sec:proof_sph}

The polygon inequalities~\eqref{eq:polygon} are clearly necessary for
the existence of a spherical cyclic polygon because every side is a
shortest geodesic. That inequality~\eqref{eq:polygon_sph} is also
necessary was already noted in the introduction. It remains to show
that these inequalities are also sufficient, and that the polygon is
unique.

We reduce the spherical case to the Euclidean one as shown in Figure~\ref{fig:sph_construction}.
\begin{figure}[h]
\labellist
\small\hair 2pt
 \pinlabel {$\ellbar_1$} [ ] at 72 53
 \pinlabel {$\ellbar_2$} [ ] at 113 75
 \pinlabel {$\ellbar_n$} [ ] at 35 74
 \pinlabel {$\alpha_2$} [ ] at 86 74.5
 \pinlabel {$\alpha_1$} [ ] at 73.5 70
 \pinlabel {$\alpha_n$} [ ] at 58 75
 \pinlabel {$\ell_1$} [ ] at 77 114
 \pinlabel {$\ell_2$} [ ] at 103 121
 \pinlabel {$\ell_n$} [ ] at 40 119
\endlabellist
\centering
\includegraphics[scale=1.0]{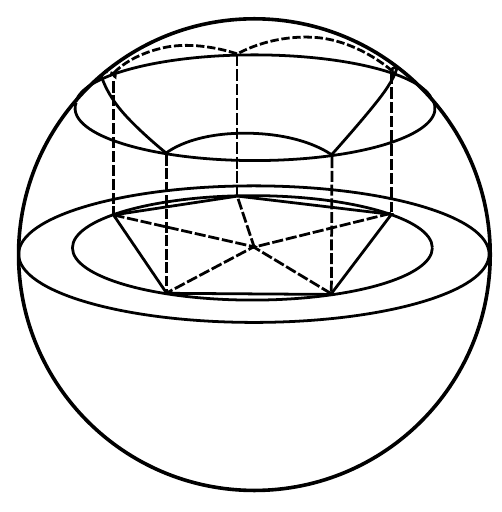}
\caption{Spherical and Euclidean polygons}
\label{fig:sph_construction}
\end{figure}
Connecting the vertices of a spherical cyclic polygon with line segments in the ambient Euclidean space, one obtains a Euclidean cyclic polygon whose circumradius is smaller than $1$. Conversely, every Euclidean polygon inscribed in a circle of radius less than $1$ corresponds to a unique spherical cyclic polygon. The spherical side lengths $\ell$ are related to the Euclidean lengths $\ellbar$ by 
\begin{equation}
\label{eq:lambda_to_ell_sph}
{\ellbar} = 2 \sin\frac{\ell}{2}.  
\end{equation}
It remains to show the following two propositions:

\begin{proposition}
  \label{prop:sph_to_euc_ineq}
  If the spherical lengths $\ell\in\R_{>0}^{n}$ satisfy the inequalities~\eqref{eq:polygon} and~\eqref{eq:polygon_sph}, then the Euclidean lengths $\ellbar$ defined by~\eqref{eq:lambda_to_ell_sph} satisfy the inequalities~\eqref{eq:polygon} as well. By Theorem~\ref{thm:euc} there is then a unique Euclidean cyclic polygon $P_{\ellbar}$ with side lengths $\ellbar$. 
\end{proposition}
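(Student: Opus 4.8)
The plan is to reduce the statement to two elementary properties of the function $g\colon[0,2\pi]\to\R$, $g(x)=2\sin\frac{x}{2}$: that $g$ is concave with $g(0)=0$, and that $\sin$ is strictly increasing on $[0,\frac{\pi}{2}]$ with $\sin\theta=\sin(\pi-\theta)$. First I would note that the hypotheses $\ell_i>0$ and $\sum_i\ell_i<2\pi$ force $0<\ell_k<2\pi$, so that $\ellbar_k=g(\ell_k)>0$ and the numbers $\ellbar_i$ are indeed positive lengths.

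Now fix an index $k$ and write $L=\sum_{i=1}^n\ell_i<2\pi$. Since $g$ is concave on $[0,2\pi]$ and vanishes at $0$, it is subadditive there: for $x,y\ge0$ with $x+y\le2\pi$ one writes $x$ and $y$ as convex combinations of $x+y$ and $0$ and applies concavity to get $g(x+y)\le g(x)+g(y)$. Iterating this over the $n-1$ indices $i\neq k$ — every partial sum being bounded by $\sum_{i\neq k}\ell_i=L-\ell_k<2\pi$ — yields
\[
  \sum_{\substack{i=1\\ i\neq k}}^n \ellbar_i \;=\; \sum_{\substack{i=1\\ i\neq k}}^n g(\ell_i) \;\ge\; g\Big(\sum_{\substack{i=1\\ i\neq k}}^n \ell_i\Big) \;=\; g(L-\ell_k).
\]

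It then remains to establish the strict inequality $g(\ell_k)<g(L-\ell_k)$, i.e.\ $\sin\frac{\ell_k}{2}<\sin\frac{L-\ell_k}{2}$, where both arguments lie in $(0,\pi)$ because $0<\ell_k<L-\ell_k<2\pi$. The polygon inequality~\eqref{eq:polygon} for the index $k$ gives $\frac{\ell_k}{2}<\frac{L-\ell_k}{2}$, and inequality~\eqref{eq:polygon_sph} gives $\frac{\ell_k}{2}<\pi-\frac{L-\ell_k}{2}$ (since $\frac{\ell_k}{2}+\frac{L-\ell_k}{2}=\frac{L}{2}<\pi$); hence $\frac{\ell_k}{2}<\min\big(\frac{L-\ell_k}{2},\,\pi-\frac{L-\ell_k}{2}\big)\le\frac{\pi}{2}$, and using that $\sin$ is strictly increasing on $[0,\frac{\pi}{2}]$ together with $\sin\theta=\sin(\pi-\theta)$ gives $\sin\frac{\ell_k}{2}<\sin\frac{L-\ell_k}{2}$. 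Combining this with the displayed inequality gives $\ellbar_k<\sum_{i\neq k}\ellbar_i$, which is~\eqref{eq:polygon} for $\ellbar$; the second sentence of the proposition then follows directly from Theorem~\ref{thm:euc}.

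I do not anticipate a real obstacle here. The only subtle point — and the reason the proposition is not entirely obvious — is that the naive estimate $\ellbar_k\le\ell_k$ (a chord is shorter than its arc) points the wrong way when one tries to bound $\sum_{i\neq k}\ellbar_i$ from below, so both hypotheses~\eqref{eq:polygon} and~\eqref{eq:polygon_sph} must genuinely be used, the spherical one precisely to keep every argument of $\sin$ inside an interval where concavity and monotonicity can be exploited.
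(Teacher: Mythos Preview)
Your proof is correct and follows essentially the same route as the paper's: both arguments first establish the strict inequality $\sin\frac{\ell_k}{2}<\sin\big(\sum_{i\neq k}\frac{\ell_i}{2}\big)$ from the two hypotheses, and then bound the right-hand side above by $\sum_{i\neq k}\sin\frac{\ell_i}{2}$. The only differences are in packaging: the paper proves the latter bound as a separate ``Sum of Sines'' lemma by induction using the addition theorem, whereas you obtain it from concavity of $g(x)=2\sin\frac{x}{2}$ on $[0,2\pi]$ and subadditivity; and the paper makes the case split $\sum_{i\neq k}\ell_i\lessgtr\pi$ explicit where your $\min$ formulation handles both cases at once.
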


\begin{proposition}
  \label{prop:radius}
  The circumradius $\bar R$ of the polygon $P_{\ellbar}$ of
  Proposition~\ref{prop:sph_to_euc_ineq} is strictly less than $1$.
\end{proposition}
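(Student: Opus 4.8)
The plan is to establish first the qualitative fact that $\bar R\neq 1$ by a short elementary argument, and then to upgrade it to $\bar R<1$ by a connectedness argument over the space of admissible edge lengths.

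First I would record that $0<\ell_k<\pi$ for every $k$: adding $\ell_k$ to both sides of the polygon inequality~\eqref{eq:polygon} and then using~\eqref{eq:polygon_sph} gives $2\ell_k<\sum_i\ell_i<2\pi$. Writing $\bar\alpha=(\bar\alpha_1,\dots,\bar\alpha_n)\in D_n$ for the central angles of $P_{\ellbar}$ and combining~\eqref{eq:eucl_radius} for $P_{\ellbar}$ (with circumradius $\bar R$) with~\eqref{eq:lambda_to_ell_sph}, we have
\[
  \sin\tfrac{\ell_k}{2}=\bar R\,\sin\tfrac{\bar\alpha_k}{2}\quad(k=1,\dots,n),\qquad \textstyle\sum_{k}\bar\alpha_k=2\pi .
\]
Now suppose $\bar R=1$. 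Then $\sin(\ell_k/2)=\sin(\bar\alpha_k/2)$, and since $\ell_k/2\in(0,\pi/2)$ this forces $\bar\alpha_k\in\{\ell_k,\,2\pi-\ell_k\}$ for each $k$. As the $\bar\alpha_k$ are positive and sum to $2\pi$, at most one of them exceeds $\pi$; since $\ell_k<\pi<2\pi-\ell_k$, at most one index $j$ can have $\bar\alpha_j=2\pi-\ell_j$. If none does, then $2\pi=\sum_k\bar\alpha_k=\sum_k\ell_k$, contradicting~\eqref{eq:polygon_sph}; if exactly one does, then $2\pi=(2\pi-\ell_j)+\sum_{k\neq j}\ell_k$, i.e.\ $\ell_j=\sum_{k\neq j}\ell_k$, contradicting~\eqref{eq:polygon} for $k=j$. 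Hence $\bar R\neq 1$. This dichotomy is the heart of the argument.

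Next I would let $\mathcal D\subset\R^n_{>0}$ be the set of $\ell$ satisfying~\eqref{eq:polygon} and~\eqref{eq:polygon_sph}; it is an intersection of half-spaces, hence convex, in particular connected. By Proposition~\ref{prop:sph_to_euc_ineq}, $\bar R$ is a well-defined function of $\ell\in\mathcal D$, and by the previous paragraph $\bar R(\ell)\neq 1$ throughout $\mathcal D$. Granting that $\bar R$ is continuous on $\mathcal D$, the sign of $\bar R-1$ is constant on $\mathcal D$; and at the regular point $\ell=(\pi/n,\dots,\pi/n)\in\mathcal D$ the polygon $P_{\ellbar}$ is the regular $n$-gon with side length $2\sin\frac{\pi}{2n}$, whose circumradius equals $\sin\tfrac{\pi}{2n}\big/\sin\tfrac{\pi}{n}<1$ because $0<\tfrac{\pi}{2n}<\tfrac{\pi}{n}<\tfrac{\pi}{2}$ for $n\geq 3$. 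Therefore $\bar R<1$ on all of $\mathcal D$. (Alternatively, one can replace continuity plus connectedness by monotonicity of $\bar R$ in each $\ell_k$ together with the fact that rescaling $\ell$ so that $\sum_k\ell_k=2\pi$ produces a polygon inscribed in the unit circle.)

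The one point that needs care — and the only real obstacle — is the continuity of $\ell\mapsto\bar R(\ell)$. Since $\ell\mapsto\ellbar$ is manifestly continuous, it suffices to show that the circumradius of $P_{\ellbar}$ depends continuously on $\ellbar$ in the open region cut out by~\eqref{eq:polygon}. By Proposition~\ref{prop:radius_crit_pt}, Proposition~\ref{prop:concavity} and its corollary, and Propositions~\ref{prop:max_boundary}--\ref{prop:max_vertex}, the vector $\bar\alpha$ is the unique maximizer of the function $f_{\ellbar}$ of~\eqref{eq:f_ell} on the compact simplex $\bar D_n$, and it lies in the interior $D_n$; since $(\ellbar,\alpha)\mapsto f_{\ellbar}(\alpha)$ is jointly continuous and each $f_{\ellbar}$ is strictly concave, $\bar\alpha(\ellbar)$ depends continuously on $\ellbar$ (a standard compactness argument, or the implicit function theorem applied to~\eqref{eq:eucl_radius} using the non-degeneracy of the Hessian guaranteed by Proposition~\ref{prop:concavity}); then $\bar R=\ellbar_1\big/\bigl(2\sin(\bar\alpha_1/2)\bigr)$ is continuous because $\bar\alpha_1\in(0,2\pi)$.
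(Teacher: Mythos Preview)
Your argument is correct but follows a genuinely different route from the paper. The paper proves $\bar R<1$ directly by a case split on whether all $\alpha_k\le\pi$ (then $\sum\ell_k<2\pi=\sum\alpha_k$ forces some $\ell_k<\alpha_k$, hence $\bar R<1$) or exactly one $\alpha_k>\pi$ (handled by induction on $n$, with an explicit addition-formula computation for the base case $n=3$ and a ``cut off a triangle'' step for the inductive case). Your approach instead first isolates the clean dichotomy showing $\bar R\neq1$---which is more elegant than anything in the paper's argument---and then upgrades to $\bar R<1$ via connectedness of the parameter domain $\mathcal D$ and continuity of $\bar R$. The trade-off is that the paper's proof is self-contained and quantitative, whereas yours is conceptually cleaner but leans on the variational machinery of Section~\ref{sec:proof_euc} to secure continuity of $\ellbar\mapsto\bar\alpha$. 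One small caveat: your parenthetical appeal to the implicit function theorem via ``non-degeneracy of the Hessian guaranteed by Proposition~\ref{prop:concavity}'' overstates what strict concavity alone gives (strict concavity does not imply a nondegenerate Hessian); but your compactness argument for continuity of the unique maximizer is perfectly sound and is all you need, so this does not affect the validity of the proof.
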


We will use the following estimate for a sum of sines in the proof of
Proposition~\ref{prop:sph_to_euc_ineq}:

\begin{lemma}[Sum of Sines]
\label{lem:sin_of_sum}
If $\beta_1, \ldots, \beta_n\in\R_{\geq 0}$ satisfy $\sum_{k=1}^n \beta_k \leq \pi$, then 
\begin{equation}
\label{ineq:sinsum}
\sin\Bigg(\sum_{k=1}^n \beta_k\Bigg) \leq \sum_{k=1}^n \sin\beta_k.
\end{equation}
\end{lemma}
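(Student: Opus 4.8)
The plan is to prove inequality~\eqref{ineq:sinsum} by induction on $n$, with the base case $n=2$ handled directly by the angle-addition formula. The key preliminary observation is that the hypothesis $\sum_{k=1}^n\beta_k\le\pi$ together with $\beta_k\ge 0$ forces every partial sum $\sum_{k\in I}\beta_k$, for $I\subseteq\{1,\dots,n\}$, to lie in the interval $[0,\pi]$; in particular all the sines appearing in the argument are nonnegative.

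For $n=1$ the statement is the trivial equality. For $n=2$, I would write $\sin(\beta_1+\beta_2)=\sin\beta_1\cos\beta_2+\cos\beta_1\sin\beta_2$ and bound each summand using $\cos\beta_i\le 1$ together with $\sin\beta_i\ge 0$ (the latter because $\beta_1,\beta_2\in[0,\pi]$), which gives $\sin(\beta_1+\beta_2)\le\sin\beta_1+\sin\beta_2$.

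For the inductive step, assume the statement for $n-1$. Given $\beta_1,\dots,\beta_n\ge 0$ with $\sum_{k=1}^n\beta_k\le\pi$, set $S=\sum_{k=1}^{n-1}\beta_k$. Then $S\ge 0$, $S\le\pi$, and $S+\beta_n\le\pi$, so the $n=2$ case applied to the pair $S,\beta_n$ yields $\sin(S+\beta_n)\le\sin S+\sin\beta_n$. The induction hypothesis applied to $\beta_1,\dots,\beta_{n-1}$ (whose sum equals $S\le\pi$) gives $\sin S\le\sum_{k=1}^{n-1}\sin\beta_k$. Chaining the two inequalities gives $\sin\big(\sum_{k=1}^n\beta_k\big)\le\sum_{k=1}^n\sin\beta_k$.

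There is no serious obstacle here: the only point requiring care is that the constraint ``$\le\pi$'' be seen to pass to the groupings used in the induction, which also guarantees that every sine stays nonnegative. An alternative route avoids the explicit induction by invoking concavity, in the spirit of the rest of the paper: $\sin$ is concave on $[0,\pi]$ with $\sin 0=0$, hence subadditive there (for $x,y\ge 0$ with $x+y\le\pi$, concavity gives $\sin x\ge\tfrac{x}{x+y}\sin(x+y)$ and $\sin y\ge\tfrac{y}{x+y}\sin(x+y)$, and one adds), and \eqref{ineq:sinsum} then follows by iterating subadditivity along partial sums, all of which lie in $[0,\pi]$.
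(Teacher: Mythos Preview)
Your proof is correct and is essentially the same as the paper's: induction on $n$ via the addition theorem, bounding the cosines by $1$ (you are in fact slightly more careful than the paper in explicitly noting that the sines are nonnegative on $[0,\pi]$, which is needed for the inequality to go the right way). The concavity/subadditivity alternative you mention is a nice variant but not the route taken in the paper.
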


\begin{proof}[Proof of Lemma~\ref{lem:sin_of_sum}]
  By induction on $n$, the base case $n=1$ being trivial. For the inductive step, use the addition theorem,
  \begin{equation*}
    \sin\Bigg(\sum_{k=1}^{n+1} \beta_k\Bigg)=
    \sin\Bigg(\sum_{k=1}^n \beta_k\Bigg)\cos\beta_{n+1}
    +\cos\Bigg(\sum_{k=1}^n \beta_k\Bigg)\sin\beta_{n+1},
  \end{equation*}
  and note that the cosines are $\leq 1$.
\end{proof}

\begin{remark}
  The statement of Lemma~\ref{lem:sin_of_sum} can be strengthened. Equality holds in~\eqref{ineq:sinsum} if and only if at most one $\beta_{k}$ is greater than zero. This is easy to see, but we do not need this stronger statement in the following proof. 
\end{remark}

\begin{proof}[Proof of Proposition~\ref{prop:sph_to_euc_ineq}]
  Suppose $\ell_1, \ldots, \ell_n\in\R_{>0}$ satisfy
  the polygon inequalities~\eqref{eq:polygon}
  and~\eqref{eq:polygon_sph}. We need to show that $\ellbar_1, \ldots,
  \ellbar_n$ defined by~\eqref{eq:lambda_to_ell_sph} satisfy
  \begin{equation}
    \label{eq:ellbar_ineq}
    \ellbar_{k}<\sum_{i\not=k}\ellbar_{i}.
  \end{equation}
  To this end, we will show that 
  \begin{equation}
    \label{eq:sin_ineq}
    \sin\frac{\ell_k}{2}  < \sin\Bigg(\sum_{i\not=k} \frac{\ell_i}{2}\Bigg),
  \end{equation}
  from which inequality~\eqref{eq:ellbar_ineq} follows by Lemma~\ref{lem:sin_of_sum}.
  To prove inequality~\eqref{eq:sin_ineq}, we consider two
  cases separately.

  \smallskip
  \begin{compactitem}
  \item $\sum_{i\not=k}\ell_i \leq \pi$.
  Inequality~\eqref{eq:sin_ineq} simply follows from the polygon
  inequality $\ell_{k}<\sum_{i\not=k}\ell_{i}$ and the monotonicity of
  the sine function on the closed interval $[0,\frac{\pi}{2}]$.
  \item $\sum_{i\not=k}\ell_i \geq \pi$. Note that
  $2\pi>\sum_i\ell_i$ implies
  $2\pi-\ell_{k}>\sum_{i\not=k}\ell_i$, and hence
  \begin{equation}
    2\pi>2\pi-\ell_{k}>\sum_{i\not=k}\ell_{i}\geq\pi.
  \end{equation}
  Inequality~\eqref{eq:sin_ineq} follows from
  $\sin\frac{\ell_k}{2}=\sin(\pi-\frac{\ell_k}{2})$ and the
  monotonicity of the sine function on the closed interval
  $[\frac{\pi}{2},\pi]$.
  \end{compactitem}

  \smallskip\noindent%
  This completes the proof of~\eqref{eq:sin_ineq} and hence the proof of Proposition~\ref{prop:sph_to_euc_ineq}.
  \end{proof}

\begin{proof}[Proof of Proposition~\ref{prop:radius}]
  Let $\alpha_{k}$ be the central angles of the Euclidean cyclic
  polygon $P_{\bar\ell}$. Then
  \begin{equation}\label{eq:radius_sph}
    \sin\frac{\ell_k}{2} = \frac{\ellbar_k}{2} = \bar{R} \sin\frac{\alpha_k}{2},
  \end{equation}
  by~\eqref{eq:eucl_radius} and~\eqref{eq:lambda_to_ell_sph}. Note
  that $\alpha_{k}$ are the central angles of both the Euclidean and
  the spherical polygon (provided it exists). We consider two cases
  separately.

  First, suppose that $\alpha_k \leq \pi$ for all $k$. Since $ \sum_k
  \ell_k < 2 \pi = \sum_k \alpha_k$, there is some $k$ such that
  $\ell_k < \alpha_k$. 
  Then $\sin\frac{\ell_k}{2} < \sin\frac{\alpha_k}{2}$, and 
  equation~\eqref{eq:radius_sph} implies that
  $\bar{R} < 1$.
  
  Otherwise, since $\sum_k\alpha_k = 2\pi$,
  there is exactly one $i$ such that $\alpha_i > \pi$, and $\alpha_k <
  \pi$ for all $k\neq i$. By symmetry, it is enough to consider the
  case 
  \begin{equation*}
    \alpha_1 > \pi,\quad\alpha_k < \pi\quad\text{for}\quad k\in\{2,\ldots,n\}.
  \end{equation*}
  For future reference, we note that $\alpha_1 > \pi$ implies that
  $\bar\ell_{1}$ is the longest side of
  $P_{\bar\ell}$. (Use~\eqref{eq:radius_sph} and the monotonicity of
  the sine function.)  

  We will show $\bar{R}<1$ by induction on $n$.
  First, assume $n=3$. Then~\eqref{eq:sin_ineq} says 
  \begin{equation*}
    \sin\frac{\ell_1}{2}<\sin\frac{\ell_2+\ell_3}{2}\,.
  \end{equation*}
  By~\eqref{eq:radius_sph} and using
  $2\pi-\alpha_{1}=\alpha_{2}+\alpha_{3}$, we have
  \begin{equation}
    \label{eq:sin_lambda/2}
    \sin\frac{\ell_1}{2}
    = \bar{R}\sin\frac{\alpha_2}{2}\cos\frac{\alpha_3}{2} + \bar{R}
    \cos\frac{\alpha_2}{2}\sin\frac{\alpha_3}{2},
  \end{equation}
  and
  \begin{equation}
    \begin{split}
      \sin\frac{\ell_2+\ell_3}{2}
      &=\sin\frac{\ell_2}{2}\cos\frac{\ell_3}{2} + 
      \cos\frac{\ell_2}{2}\sin\frac{\ell_3}{2}\\
      &= \bar{R}\sin\frac{\alpha_2}{2}\cos\frac{\ell_3}{2} +
      \bar{R}\cos\frac{\ell_2}{2}\sin\frac{\alpha_3}{2}\;.
    \end{split}
    \label{eq:sin_(lambda+lambda)/2}
  \end{equation}
  For at least one $k \in \{2,3\}$, $\cos\frac{\alpha_k}{2} <
  \cos\frac{\ell_k}{2}$ and hence $\sin\frac{\alpha_{k}}{2} >
  \sin\frac{\ell_{k  }}{2}$. Equation~\eqref{eq:radius_sph} implies
  $\bar{R}<1$.
  
  Now assume that $\bar{R}<1$ has already been shown if $P_{\bar\ell}$
  has at most $n$ sides. Suppose $P_{\bar\ell}$ has $n+1$ sides. The
  idea of the following argument is to cut off a triangle with sides $\ellbar_{n}$,
  $\ellbar_{n+1}$, and $\bar{\lambda} = 2\bar{R}\sin \frac{\alpha_n +
    \alpha_{n+1}}{2}$.
  Since $\bar{\lambda}\leq\ellbar_{1}$ (the longest side), and
  $\bar\ell_{1}\leq 2$ by~\eqref{eq:lambda_to_ell_sph}, we may define
  $\lambda = 2 \arcsin\frac{ \bar{\lambda}}{2}$.
  Now assume $\bar{R} \geq 1$. Then, 
  by the inductive hypothesis, the polygon
  inequalities~\eqref{eq:polygon} or~\eqref{eq:polygon_sph} are
  violated for the cut-off triangle and the remaining
  $n$-gon. Inequality~\eqref{eq:polygon_sph} cannot be violated
  because it was assumed to hold for
  $\ell_{1},\ldots,\ell_{n+1}$. Hence,
  \begin{equation*}
    \ell_1 \geq \ell_2 + \dots + \ell_{n-1} + \lambda\quad\text{and}\quad\lambda \geq \ell_n + \ell_{n+1}.
  \end{equation*}
  This implies $\ell_1 \geq \ell_2 + \dots + \ell_{n+1}$. Conversely,
  if~\eqref{eq:polygon} and~\eqref{eq:polygon_sph} hold, then $\bar{R} < 1$.
  This completes the proof of Proposition~\ref{prop:radius}.
\end{proof}

\section{Hyperbolic polygons. Proof of Theorem~\ref{thm:hyp}}
\label{sec:proof_hyp}

The polygon inequalities~\eqref{eq:polygon} are clearly necessary for
the existence of a hyperbolic cyclic polygon, because every side is a
shortest geodesic. It remains to show that they are also sufficient,
and that the polygon is unique, i.e.,
Proposition~\ref{prop:ex_uniq_hyp}. First, we review
some basic facts from hyperbolic geometry.

As in the spherical case (Section~\ref{sec:proof_sph}), we will
connect vertices by straight line segments in the ambient vector
space. But instead of the sphere, we consider the hyperbolic plane in
the hyperboloid model,
\begin{equation*}
 \HH^2=\{x\in \R^{2,1} \,|\, \langle x,x\rangle=-1,\, x_3>0\},
\end{equation*}
where $\R^{2,1}$ denotes the vector space $\R^{3}$ equipped with the
scalar product
\begin{equation*}
 \langle x,y\rangle=x_1y_1+x_2y_2-x_3y_3,
\end{equation*}
and lengths and angles in $\HH^{2}$ are measured using the Riemannian
metric induced by this scalar product.

Straight lines (i.e., geodesics) in $\HH^{2}$ are the intersections of
$\HH^{2}$ with $2$-dimensional subspaces of $\R^3$.  The length
$\ell$ of the geodesic segment connecting points $p,q\in\HH^{2}$ is
determined by
\begin{equation*}
 \cosh\ell = -\langle p,q\rangle.
\end{equation*}
The length of the straight line segment connecting points $p,q\in\HH^2$ in the
ambient $\R^{2,1}$ is
\begin{equation*}
\ellbar=\sqrt{\langle p-q,p-q\rangle}.
\end{equation*}
This chordal length $\ellbar$ and the hyperbolic distance $\ell$ are related by
\begin{equation}\label{eq:hypLength}
\frac{\ellbar}{2}=\sinh\frac{\ell}{2}.
\end{equation}

An affine plane in $\R^{2,1}$ is called spacelike, lightlike, or
timelike, if the restriction of the scalar product
$\langle\cdot,\cdot\rangle$ to (the tangent space of) the affine plane
is positive definite, positive semidefinite, or indefinite,
respectively. In terms of the standard Euclidean metric on $\R^{3}$, a
plane is spacelike, lightlike, or timelike if its slope is less than,
equal to, or greater than $45^{\circ}$.

A curve of intersection of $\HH^{2}$ with an affine plane in
$\R^{2,1}$ that does not contain~$0$ is a hyperbolic circle, a
horocycle, or a hypercycle, depending on whether the plane is
spacelike, lightlike, or timelike.

Thus, connecting the vertices of a hyperbolic cyclic polygon by
straight line segments in the ambient $\R^{2,1}$, one obtains a planar
polygon in $\R^{2,1}$, but the intrinsic geometry of the plane will
only be Euclidean if the hyperbolic polygon is inscribed in a circle
(see Figure~\ref{fig:hyp_polygon_circle2}). If the polygon is
inscribed in a horocycle or hypercycle, then the geometry of the plane
will be degenerate with signature $(+,0)$ or a $1+1$-spacetime with
signature $(+,-)$, respectively.
\begin{figure}[p]
\labellist
\small\hair 2pt
 \pinlabel {$\ellbar_n$} [ ] at 127 100
 \pinlabel {$\ellbar_1$} [ ] at 94 100
 \pinlabel {$\ellbar_2$} [ ] at 71 85
 \pinlabel {$\ell_n$} [ ] at 121 64
 \pinlabel {$\ell_1$} [ ] at 100 84
 \pinlabel {$\ell_2$} [ ] at 60 62
\endlabellist
\centering
\includegraphics[scale=1.0]{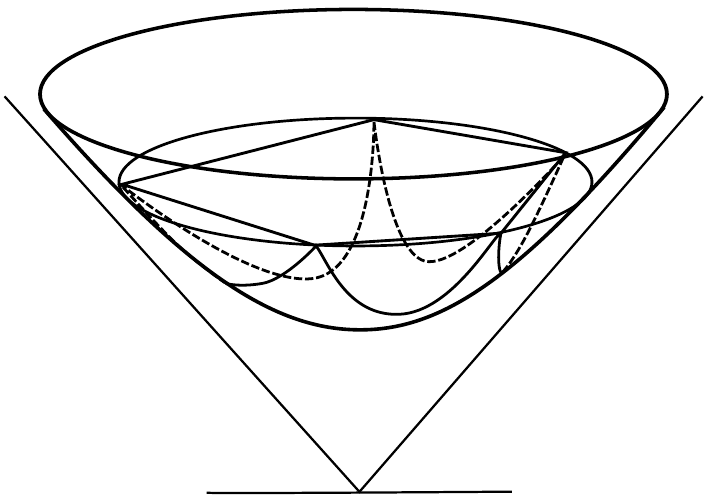}
\caption{Hyperbolic polygon inscribed in a circle, shown in the
  hyperboloid model}
\label{fig:hyp_polygon_circle2}
\end{figure}

\begin{proposition}
\label{prop:ellbar_ineq}
Let $P_{\ell}$ be a hyperbolic cyclic polygon with side lengths
$\ell_{1},\ldots,\ell_n\in\R_{>0}$,
and let $\ellbar_{k}$ be the chordal
lengths~\eqref{eq:hypLength}. If $P_{\ell}$ is inscribed in
 \begin{compactenum}[(i)]
  \item a circle then
  \begin{equation}\label{eq:hypCircle}
   \ellbar_{k}<\sum_{\substack{i=1\\i\not=k}}^{n}\ellbar_{i}
   \quad\text{for all }k.
  \end{equation}
  \item a horocycle then
  \begin{equation}\label{eq:hypHoro}
   \ellbar_{k}=\sum_{\substack{i=1\\i\not=k}}^{n}\ellbar_{i}
   \quad\text{for one }k.
  \end{equation}
  \item a hypercycle then
  \begin{equation}\label{eq:hypCurve}
   \ellbar_{k}>\sum_{\substack{i=1\\i\not=k}}^{n}\ellbar_{i}
   \quad\text{for one }k.
  \end{equation}
 \end{compactenum}
\end{proposition}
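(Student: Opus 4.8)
The plan is to study the Euclidean-looking planar polygon $\bar{P}$ obtained by joining consecutive vertices of $P_{\ell}$ by straight chords in the ambient $\R^{2,1}$. It lies in the affine plane $E$ spanned by the cyclic curve, and, as recalled above, $E$ is spacelike, lightlike or timelike according to whether $P_{\ell}$ is inscribed in a circle, a horocycle or a hypercycle, with cyclic curve $C=E\cap\HH^{2}$. The first step, common to all three cases, is to observe that the vertices of $P_{\ell}$ occur in their natural cyclic order along $C$, so that the chord lengths $\ellbar_{k}$ are exactly the edge lengths of a convex planar polygon $\bar{P}$ inscribed in $C$. This I would deduce from convexity of $P_{\ell}$ by means of the radial (``Klein-type'') projection from the origin onto $E$: since the region $\{x\in\R^{2,1}\mid\langle x,x\rangle\le-1,\ x_{3}>0\}$ is convex, its intersection with $E$ is a convex region in $E$ with boundary $C$; the radial projection is injective on $\HH^{2}$, fixes $C$ pointwise, and maps geodesic segments of $\HH^{2}$ into straight lines of $E$, hence carries $P_{\ell}$ to a convex polygon in $E$ with the same vertices, necessarily the affine convex hull $\bar{P}$ of those vertices, whose $n$ edges join the vertices consecutive along $C$. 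Getting this reduction clean is the main obstacle --- especially for the hypercycle, where $C$ does \emph{not} bound a convex region inside $\HH^{2}$ itself, so passing to the plane $E$ is essential; once it is in place, the three cases are short computations.

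Case (i). Here $E$ is a Euclidean plane and $C$ an honest round circle, so $\bar{P}$ is a nondegenerate Euclidean polygon. Since $n\ge 3$ points of a circle are never collinear, for each $k$ the polygonal path formed by the remaining $n-1$ edges joins the two endpoints of the $k$-th edge and is strictly longer than the straight chord between them; this is inequality~\eqref{eq:hypCircle}.

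Case (ii). Choosing coordinates so that $E=\{x_{3}-x_{1}=1\}$, the curve $C$ is parametrized by $\gamma(t)=(t^{2}/2,\,t,\,1+t^{2}/2)$, and a one-line computation gives $\langle\gamma(s)-\gamma(t),\gamma(s)-\gamma(t)\rangle=(s-t)^{2}$, so the chordal distance of two vertices equals the difference of their parameters. Labelling the vertices so that their parameters increase, $t_{1}<\dots<t_{n}$, the convex polygon $\bar{P}$ has the short edges $\gamma(t_{k})\gamma(t_{k+1})$ for $k<n$ and one long edge $\gamma(t_{1})\gamma(t_{n})$, whence $\ellbar_{n}=t_{n}-t_{1}=\sum_{k<n}(t_{k+1}-t_{k})=\sum_{i\ne n}\ellbar_{i}$, which is~\eqref{eq:hypHoro}.

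Case (iii). Choosing coordinates so that $E=\{x_{1}=a\}$ for a suitable $a\in\R$, the curve $C$ is parametrized by $\gamma(u)=(a,\,\sqrt{1+a^{2}}\sinh u,\,\sqrt{1+a^{2}}\cosh u)$; from $\cosh\ell_{ij}=-\langle\gamma(u_{i}),\gamma(u_{j})\rangle=(1+a^{2})\cosh(u_{i}-u_{j})-a^{2}$ together with~\eqref{eq:hypLength} one gets $\ellbar_{ij}=2\sqrt{1+a^{2}}\,\bigl|\sinh\tfrac{u_{i}-u_{j}}{2}\bigr|$. Labelling so that $u_{1}<\dots<u_{n}$ and writing $d_{k}=\tfrac{u_{k+1}-u_{k}}{2}>0$, inequality~\eqref{eq:hypCurve} for $k=n$ amounts to $\sinh\bigl(\sum_{k<n}d_{k}\bigr)>\sum_{k<n}\sinh d_{k}$, which follows by induction on $n-1\ge 2$ from $\sinh(x+y)=\sinh x\cosh y+\cosh x\sinh y$ and $\cosh d_{k}>1$. (In cases (ii) and (iii) the other $n-1$ relations hold strictly in the opposite direction, but only the single displayed one is asserted.)
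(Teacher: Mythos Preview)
Your argument is correct and follows essentially the same route as the paper: case~(i) is the Euclidean polygon inequality, case~(ii) is a telescoping sum, and case~(iii) reduces to the strict superadditivity of $\sinh$ on $\R_{>0}$. The differences are presentational. The paper treats~(ii) in the upper half-plane model (invoking the fact that chordal length equals horocycle arc length) and~(iii) in the disk model via perpendicular foot-points on the base geodesic, obtaining $\ellbar_k/2=\cosh(R)\sinh(a_k/2)$; your explicit hyperboloid parametrizations recover the same formulas directly (your $\sqrt{1+a^{2}}$ is the paper's $\cosh R$, your parameter $u$ is arc length along the base geodesic). You also supply a justification, via radial projection onto $E$, for why the edges of $P_\ell$ join vertices \emph{consecutive along $C$}, a point the paper leaves to the figures. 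One small wrinkle worth making explicit in your write-up: in the hypercycle case the radial projection onto $E=\{x_1=a\}$ is undefined on the geodesic $\{x_1=0\}\cap\HH^2$, so you should note that $P_\ell$, being contained in the geodesically convex half-plane $\{x_1>0\}\cap\HH^2$ (all its vertices lie there), avoids this locus.
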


\begin{proof}
  (i) If $P_{\ell}$ is inscribed in a circle, then the chordal polygon
  obtained by connecting the vertices of $P_{\ell}$ by straight line
  segments in $\R^{2,1}$ is a Euclidean polygon. Hence, its side
  lengths $\bar\ell$ satisfy~\eqref{eq:hypCircle}.

  (ii) If $P_{\ell}$ is inscribed in a horocycle, then the chordal
  length $\bar\ell_{k}$ of a side is equal to the length of the arc of
  the horocycle between its vertices (see
  Figure~\ref{fig:horocycle}). Since one horocycle arc comprises all
  others, this implies~\eqref{eq:hypHoro}.
  
  \begin{figure}[p]
    \labellist
    \small\hair 2pt
    \pinlabel {$\ell_1$} [ ] at 32 40
    \pinlabel {$\ell_2$} [ ] at 55 39
    \pinlabel {$\ell_{n-1}$} [ ] at 138 43
    \pinlabel {$\ell_n$} [ ] at 92 73
    \pinlabel {$\ellbar_1$} [ ] at 32 25
    \pinlabel {$\ellbar_2$} [ ] at 55 25
    \pinlabel {$\ellbar_{n-1}$} [ ] at 138 25
    \pinlabel {$y=1$} [ ] at 196 33
    \endlabellist
    \centering
    \includegraphics[scale=1.0]{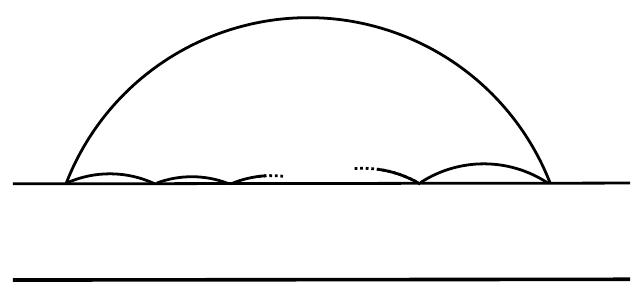}
    \caption{Polygon inscribed in a horocycle, shown in the Poincar\'e
      half-plane model. Here, $\bar\ell_{n}=\sum_{i\not=n}\bar\ell_{i}$ is
      the largest chordal length. Since all horocycles are congruent,
      we may without loss of generality assume that the polygon is
      inscribed in the horocycle $y=1$.}
    \label{fig:horocycle}
  \end{figure}

  (iii) If $P_{\ell}$ is inscribed in a hypercycle at distance $R$
  from a geodesic $g$, then the chordal lengths $\bar\ell_k$, the
  hypercycle ``radius'' $R$, and the distances $a_{k}$ between the
  foot points of the perpendiculars from the vertices to $g$ (see
  Figure~\ref{fig:geodesic}) are related by 
  \begin{equation}
    \label{eq:hypGeodesic}
    \frac{\ellbar_k}{2}=
    {\cosh(R)}\sinh\frac{a_k}{2}. 
  \end{equation}
  \begin{figure}[p]
    \labellist
    \small\hair 2pt
    \pinlabel {$\ell_1$} [ ] at 42 96
    \pinlabel {$\ell_2$} [ ] at 76 105
    \pinlabel {$\ell_{n-1}$} [ ] at 141 99
    \pinlabel {$\ell_n$} [ ] at 76 89
    \pinlabel {$a_1$} [ ] at 40 74
    \pinlabel {$a_2$} [ ] at 76 74                                                                                                                                                      
    \pinlabel {$a_{n-1}$} [ ] at 132 74
    \pinlabel {$\dots$} [ ] at 106 88
    \pinlabel {$R$} [ ] at 100 98
    \pinlabel {$R$} [ ] at 63 98
    \pinlabel {$g$} [ ] at 10 73
    \endlabellist
    \centering
    \includegraphics[scale=1.0]{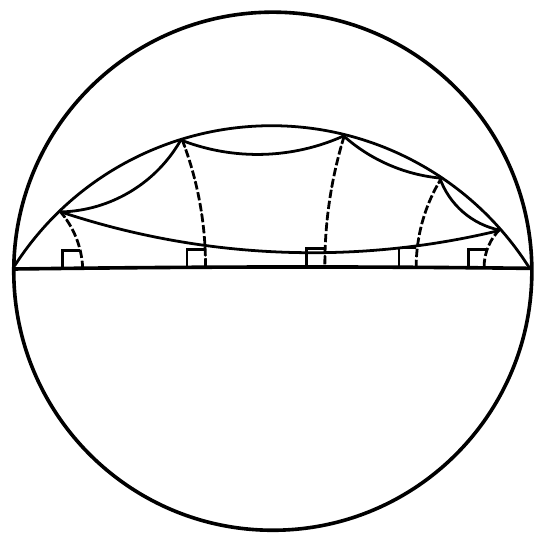}
    \caption{Polygon inscribed in a hypercycle, shown in the
      Poincar\'e disk model. Here, $\ell_{n}$ and $\bar\ell_{n}$ are
      the largest side length and chordal length, respectively.}
    \label{fig:geodesic}
  \end{figure}
  Since one of the segments of $g$ comprises all others,
  \begin{equation*}
    a_{k}=\sum_{i\not=k}a_{i}\quad\text{for one } k.
  \end{equation*}
  With~\eqref{eq:hypGeodesic} this implies~\eqref{eq:hypCurve}:
  \begin{equation*}
    \frac{\ellbar_{k}}{2} 
    =
    \cosh(R)\sinh\Big(\sum_{i\not=k}\frac{a_i}{2}\Big)
    >
    \sum_{i\not=k}\cosh(R)\sinh\frac{a_i}{2}=\sum_{i\not=k} \frac{\ellbar_i}{2}\,,
  \end{equation*}
  where we have used the inequality $\sinh(x+y)>\sinh(x)+\sinh(y)$, which
  holds for positive $x$, $y$. This follows immediately from the
  addition theorem for the hyperbolic sine function.

  This completes the proof of Proposition~\ref{prop:ellbar_ineq}.
\end{proof}
 
\begin{proposition}
  \label{prop:ex_uniq_hyp}
  If $\ell \in \R^n_{>0}$ satisfies the polygon
  inequalities~\eqref{eq:polygon}, then there exists a unique
  hyperbolic cyclic polygon with these side lengths.
\end{proposition}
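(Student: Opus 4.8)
The plan is to treat three cases according to the type of curve in which the polygon is inscribed, using Proposition~\ref{prop:ellbar_ineq} to see which case the prescribed data forces. Given $\ell\in\R_{>0}^{n}$ satisfying the polygon inequalities, set $\ellbar_{k}=2\sinh(\ell_{k}/2)$. Since $\sum_{i\neq j}\ellbar_{i}>\ellbar_{k}$ whenever $j\neq k$ (as $n\geq3$), at most one index can satisfy $\ellbar_{k}\geq\sum_{i\neq k}\ellbar_{i}$, so exactly one of the following holds:
\begin{compactenum}[(a)]
\item $\ellbar_{k}<\sum_{i\neq k}\ellbar_{i}$ for all $k$;
\item $\ellbar_{m}=\sum_{i\neq m}\ellbar_{i}$ for exactly one $m$, with strict inequality for every other index;
\item $\ellbar_{m}>\sum_{i\neq m}\ellbar_{i}$ for exactly one $m$.
\end{compactenum}
By Proposition~\ref{prop:ellbar_ineq}, a hyperbolic cyclic polygon with side lengths $\ell$ can be inscribed in a circle, a horocycle, or a hypercycle only if its chordal lengths fall under (a), (b), or (c), respectively. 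Hence in case (a) every such polygon is inscribed in a circle, in case (b) in a horocycle, and in case (c) in a hypercycle, and it suffices to prove, in each case separately, the existence and uniqueness of a polygon of the corresponding type realizing the chordal lengths $\ellbar$ (equivalently the side lengths $\ell$, by~\eqref{eq:hypLength}).

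Case (a). Here $\ellbar$ satisfies the polygon inequalities, so Theorem~\ref{thm:euc} gives a unique Euclidean cyclic polygon $P_{\ellbar}$, say with circumradius $\bar R>0$. I would realize it in the hyperboloid model by placing it on a hyperbolic circle of radius $r=\arsinh\bar R$: such a circle lies on a spacelike affine plane, and a short computation shows that its radius with respect to the Euclidean metric induced on that plane equals $\sinh r=\bar R$, so the vertices acquire chordal lengths $\ellbar_{k}$ and hence hyperbolic side lengths $\ell_{k}$. Conversely, joining the vertices of any hyperbolic polygon inscribed in a circle by chords in $\R^{2,1}$ produces a Euclidean cyclic polygon with side lengths $\ellbar$; by Theorem~\ref{thm:euc} this is $P_{\ellbar}$ up to isometry, and its circumradius determines $r$, which yields uniqueness.

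Case (b). Since all horocycles are congruent, I would use the horocycle $y=1$ in the Poincar\'e half-plane, on which hyperbolic arc length equals Euclidean length and the chord between $(x,1)$ and $(x',1)$ has chordal length $|x-x'|$. Relabelling so that $m=n$, placing vertices at $x$-coordinates $0,\ \ellbar_{1},\ \ellbar_{1}+\ellbar_{2},\ \dots,\ \sum_{i=1}^{n-1}\ellbar_{i}=\ellbar_{n}$ yields a convex polygon whose first $n-1$ sides have chordal lengths $\ellbar_{1},\dots,\ellbar_{n-1}$ and whose closing side has chordal length $\ellbar_{n}$, as required; after transporting any competitor to $y=1$ its vertices are forced up to translation and reflection, giving uniqueness. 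Case (c) is where the real work lies. Relabel so that $m=n$. By~\eqref{eq:hypGeodesic}, a hyperbolic polygon inscribed in a hypercycle at distance $R>0$ from a geodesic $g$ corresponds to placing the feet of the perpendiculars on $g$ with consecutive gaps $a_{k}=2\arsinh\bigl(\ellbar_{k}/(2\cosh R)\bigr)$, subject to the closing condition $a_{n}=\sum_{i<n}a_{i}$, that is, to a solution $R>0$ of
\[
 \arsinh\Bigl(\frac{\ellbar_{n}}{2\cosh R}\Bigr)=\sum_{i<n}\arsinh\Bigl(\frac{\ellbar_{i}}{2\cosh R}\Bigr).
\]

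Existence in case (c) follows from the intermediate value theorem applied to the difference $\Phi(R)$ of the two sides: as $R\to\infty$ all arguments tend to $0$ and $\arsinh t=t+o(t)$, so $\Phi(R)$ eventually has the sign of $\ellbar_{n}-\sum_{i<n}\ellbar_{i}>0$, whereas $\Phi(R)\to\tfrac{1}{2}\bigl(\ell_{n}-\sum_{i<n}\ell_{i}\bigr)<0$ as $R\to0$, by the polygon inequality for the index $n$ (this is where the hypothesis on $\ell$ enters). For uniqueness I would show that the ratio $\arsinh(c\ellbar_{n})\big/\sum_{i<n}\arsinh(c\ellbar_{i})$ is strictly decreasing in $c=1/(2\cosh R)\in(0,\tfrac{1}{2})$. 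Its logarithmic derivative equals $\varphi(\ellbar_{n})-\sum_{i<n}w_{i}\varphi(\ellbar_{i})$, where $\varphi(b)=\bigl(b/\sqrt{1+c^{2}b^{2}}\bigr)/\arsinh(cb)$ and the $w_{i}$ are positive weights summing to $1$; since $\varphi(b)=\eta(cb)/c$ with $\eta(u)=\bigl(u/\sqrt{1+u^{2}}\bigr)/\arsinh(u)$ strictly decreasing on $\R_{>0}$ --- which follows from the elementary inequality $\arsinh(u)<u\sqrt{1+u^{2}}$ for $u>0$ --- and $\ellbar_{n}>\ellbar_{i}$ for every $i<n$, this logarithmic derivative is negative. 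As the ratio exceeds $1$ near $c=0$ and equals $\ell_{n}/\sum_{i<n}\ell_{i}<1$ at $c=\tfrac{1}{2}$, there is a unique admissible $c$, hence a unique $R^{\ast}>0$; the hypercycle of radius $R^{\ast}$ (which has curvature $\tanh R^{\ast}\in(0,1)$) carries the desired polygon, and the same monotonicity argument gives its uniqueness. The steps I expect to demand the most care are this monotonicity estimate and checking that the correspondence in case (a) really is a bijection.
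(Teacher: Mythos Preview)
Your proof is correct and follows the same three-case decomposition as the paper, reducing the circle case to Theorem~\ref{thm:euc}, handling the horocycle case directly, and treating the hypercycle case by an intermediate-value/monotonicity argument in the parameter $\bar R=\cosh R$ (your $c=1/(2\bar R)$).

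The only substantive difference is your uniqueness argument in case~(c). The paper shows that the function
\[
\Phi(\bar R)=\arsinh\!\Big(\tfrac{\ellbar_{n}}{2\bar R}\Big)-\sum_{i<n}\arsinh\!\Big(\tfrac{\ellbar_{i}}{2\bar R}\Big)
\]
has $\Phi'>0$ \emph{at every zero} of $\Phi$, using the subadditivity $\tanh\sum a_i<\sum\tanh a_i$ after writing $\Phi'(\bar R)=\bar R^{-1}\big(-\tanh a_n+\sum_{i<n}\tanh a_i\big)$ with $a_k=\arsinh(\ellbar_k/2\bar R)$; at a zero one has $a_n=\sum_{i<n}a_i$, and the inequality finishes it. You instead prove a global statement: the ratio $\arsinh(c\ellbar_n)\big/\sum_{i<n}\arsinh(c\ellbar_i)$ is strictly decreasing on $(0,\tfrac12)$, via the monotonicity of $\eta(u)=\tfrac{u}{\sqrt{1+u^2}}\big/\arsinh u=\tanh(\arsinh u)/\arsinh u$. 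Both arguments are short and rest on closely related elementary inequalities (yours is equivalent to $\sinh v\cosh v>v$); the paper's is a touch slicker since it only needs positivity at the zero, while yours gives slightly more information (global monotonicity of the ratio). Either way the conclusion is the same.
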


\begin{proof} 
  Suppose $\ell \in \R^n_{>0}$ satisfies the polygon
  inequalities~\eqref{eq:polygon}. Let $\bar\ell$ be the corresponding
  chordal lengths~\eqref{eq:hypLength}. We will treat each case of
  Proposition~\ref{prop:ellbar_ineq} separately. In each case, we will
  tacitly use Proposition~\ref{prop:ellbar_ineq} and its proof. Our
  treatment of case (iii) is analogous to Penner's
  proof~\cite{Penner1987} of Theorem~\ref{thm:euc} (his Theorem~6.2).

  (i) If the chordal lengths $\bar\ell$ satisfy
  condition~\eqref{eq:hypCircle}, then the existence and uniqueness of
  a hyperbolic cyclic polygon with side lengths~$\ell$ follows from
  the existence and uniqueness of a Euclidean cyclic polygon with side
  lengths~$\bar\ell$, i.e., from Theorem~\ref{thm:euc} (see
  Figure~\ref{fig:hyp_polygon_circle2}).

  (ii) If the chordal lengths $\bar\ell$ satisfy
  condition~\eqref{eq:hypHoro}, then the corresponding hyperbolic
  cyclic polygon can be constructed by marking off the lengths
  $\bar\ell_{i}$ for $i\not=k$ along a horocycle (see
  Figure~\ref{fig:horocycle}). To see the uniqueness claim, note
  that all horocycles are congruent.

  (iii) It remains to consider the case that the chordal lengths
  $\bar\ell$ satisfy condition~\eqref{eq:hypCurve}. For simplicity, we
  will assume that $\ell_{n}$ is the largest side length. Then
  $\bar\ell_{n}$ is the largest chordal length and
  condition~\eqref{eq:hypCurve} says
  \begin{equation}
    \bar\ell_{n}>\sum_{k=1}^{n-1}\bar\ell_{k}\,.
  \end{equation}

  Now suppose $P_{\ell}$ is a hyperbolic polygon with side lengths $\ell$
  that is inscribed in a hypercycle at distance $R$ from its
  geodesic $g$, and let 
  \begin{equation}
    \label{eq:barR}
    \bar R=\cosh R. 
  \end{equation}
  Then the distances $a_{k}$ between the foot points (see
  Figure~\ref{fig:geodesic}) satisfy
  \begin{equation*}
    a_{n}=\sum_{k=1}^{n-1}a_{k}, 
  \end{equation*}
  Using~\eqref{eq:hypGeodesic}, one obtains
  \begin{equation}
    \label{eq:arsinh_sum}
    \arsinh\Big(\frac{\bar\ell_{n}}{2\bar{R}}\Big)=
    \sum_{k=1}^{n-1}\arsinh\Big(\frac{\bar\ell_{k}}{2\bar{R}}\Big).
  \end{equation}
  Conversely, if, for given $\ell$, a number $\bar{R}>1$
  satisfies~\eqref{eq:arsinh_sum} then $R$ defined by~\eqref{eq:barR}
  is the correct hypercycle distance. More precisely, one can then
  construct a hyperbolic cyclic polygon with side lengths $\ell$ by
  marking off the distances $a_{1},\ldots,a_{n-1}$ determined
  by~\eqref{eq:hypGeodesic} along a geodesic and intersect the
  perpendiculars in the marked points with a hypercycle at distance
  $R$ (see Figure~\ref{fig:geodesic}).

  It remains to show that there is exactly one $\bar{R}>1$
  satisfying~\eqref{eq:arsinh_sum}. To this end, consider the function 
  \begin{equation}
    \label{eq:Phi}
    \Phi(x)=\arsinh\Big(\frac{\bar\ell_{n}}{2x}\Big)
    -\sum_{k=1}^{n-1}\arsinh\Big(\frac{\bar\ell_{k}}{2x}\Big).
  \end{equation}
  We need to show that $\Phi$ has exactly one zero in the interval
  $(1,\infty)$. Using~\eqref{eq:hypLength}, we see
  \begin{equation}
    \label{eq:Phi1}
    \Phi(1) = \frac{1}{2}\Big(\ell_{n}-\sum_{k=1}^{n-1}\ell_{k}\Big)<0.
  \end{equation}
  For $x\rightarrow\infty$,
  \begin{equation}
    \label{eq:Phi_infty_asymptotic}
    \Phi(x)=\frac{1}{2x}\Big(\bar\ell_{n}-\sum_{k=1}^{n-1}\bar\ell_{k}\Big)
    + o\Big(\frac{1}{x}\Big),
  \end{equation}
  so 
  \begin{equation*}
    \Phi(x)>0\quad\text{for large }x.
  \end{equation*}
  By continuity, $\Phi$ has at least one zero in the interval
  $(1,\infty)$.  

  Finally, we will show that the derivative
  \begin{equation}
    \label{eq:Phiprime}
    \Phi'(x)=
    \frac{1}{x}\Bigg(
    -\frac{\frac{\bar\ell_{n}}{2x}}{\sqrt{1+\big(\frac{\bar\ell_{n}}{2x}\big)^{2}}}
    +
    \sum_{k=1}^{n-1}
    \frac{\frac{\bar\ell_{k}}{2x}}{\sqrt{1+\big(\frac{\bar\ell_{k}}{2x}\big)^{2}}}
    \Bigg),
  \end{equation}
  is positive at the positive zeroes of $\Phi$. This implies that
  $\Phi$ has at most one zero in $\R_{>0}$. Let us define
  \begin{equation*}
    a_{k}(x)=\arsinh\Big(\frac{\bar\ell_{k}}{2x}\Big),
  \end{equation*}
  so
  \begin{equation*}
    \Phi(x)=a_{n}(x)-\sum_{k=1}^{n-1}a_{k}(x),
  \end{equation*}
  and
  \begin{equation*}
    \Phi'(x)=\frac{1}{x}\Big(
    -\tanh a_{n}(x)+\sum_{k=1}^{n-1}\tanh a_{k}(x)
    \Big).
  \end{equation*}
  The claim follows from the inequality 
  \begin{equation*}
    \tanh\sum_{j=1}^{m}a_{j}<\sum_{j=1}^{m}\tanh a_{j},
  \end{equation*}
  which holds for $m\geq 2$ and positive numbers $a_{j}$. (Use
  induction on $m$ and the addition theorem
  for $\tanh$ for the base case $m=2$.)
  
  This concludes the proof of Proposition~\ref{prop:ex_uniq_hyp}.
\end{proof}

\section{Concluding remarks on $\boldsymbol{1+1}$
  spacetime}
\label{sec:concluding}

The scalar product of $\R^{1,1}$ is 
\begin{equation*}
  \langle x,y\rangle_{1,1}=x_{1}y_{1}-x_{2}y_{2},
\end{equation*}
and the length of a spacelike vector $x$ is $\ell=\sqrt{\langle
  x,x\rangle_{1,1}}$. The proof of Theorem~\ref{thm:hyp} for polygons
inscribed in hypercycles (Section~\ref{sec:proof_hyp}) also proves
the following theorem about ``cyclic'' polygons in $1+1$ spacetime.

\begin{theorem}
  \label{thm:11}
  There exists a polygon in $\R^{1,1}$ with $n\geq 3$ spacelike sides
  with lengths $\ell_{1},\ldots,\ell_n>0$ that is inscribed in one
  branch of a hyperbola $\langle x,x\rangle_{1,1}=-R^{2}$ if and only
  if
  \begin{equation}
    \label{eq:11ineq}
    \ell_{k}>\sum_{\substack{i=1\\i\not=k}}^{n}\ell_{i}\quad\text{for
      one }k,
  \end{equation}
  and this polygon is unique.
\end{theorem}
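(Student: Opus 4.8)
The plan is to transcribe the argument already given for case (iii) of Proposition~\ref{prop:ex_uniq_hyp} almost verbatim, since the hypercycle case of the hyperbolic theorem \emph{is} the $1+1$-spacetime case, just seen from within the ambient pseudo-Euclidean plane rather than from the hyperboloid above it. First I would record the geometric dictionary. A polygon in $\R^{1,1}$ inscribed in one branch of $\langle x,x\rangle_{1,1}=-R^{2}$ is, after rescaling by $1/R$, a polygon inscribed in the branch $\langle x,x\rangle_{1,1}=-1$; parametrizing that branch by $x(t)=R(\sinh t,\cosh t)$, the spacelike chord between parameter values $t$ and $t'$ has squared length $R^{2}\big((\sinh t-\sinh t')^{2}-(\cosh t-\cosh t')^{2}\big)=2R^{2}(\cosh(t-t')-1)=4R^{2}\sinh^{2}\tfrac{t-t'}{2}$. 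Hence if $a_{k}=|t_{k}-t_{k-1}|$ denotes the "parameter gap" of the $k$-th side (the analogue of the $a_{k}$ in Figure~\ref{fig:geodesic}), then
\begin{equation*}
  \ell_{k}=2R\sinh\frac{a_{k}}{2}.
\end{equation*}
This is exactly equation~\eqref{eq:hypGeodesic} with $\cosh R$ replaced by $R$ and $\bar\ell_{k}$ replaced by $\ell_{k}$: the hypercycle "radius" $\cosh R$ of the hyperbolic picture is replaced by the hyperbola parameter $R$, and the chordal length $\bar\ell_{k}$ of the hyperbolic picture is replaced by the genuine spacelike length $\ell_{k}$ here, because in $\R^{1,1}$ there is no ambient Euclidean metric — the pseudo-Euclidean chord length \emph{is} the side length.

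Next I would establish necessity of~\eqref{eq:11ineq}. Going once around the branch of the hyperbola, the parameter increases monotonically, so exactly one of the parameter gaps, say $a_{k}$, equals the sum of the others: $a_{k}=\sum_{i\neq k}a_{i}$. Applying the strict superadditivity of $\sinh$ on positive arguments (the inequality $\sinh(x+y)>\sinh x+\sinh y$ for $x,y>0$, already used in the proof of Proposition~\ref{prop:ellbar_ineq}), one gets
\begin{equation*}
  \frac{\ell_{k}}{2}=R\sinh\Big(\sum_{i\neq k}\frac{a_{i}}{2}\Big)
  >\sum_{i\neq k}R\sinh\frac{a_{i}}{2}=\sum_{i\neq k}\frac{\ell_{i}}{2},
\end{equation*}
which is~\eqref{eq:11ineq}. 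For sufficiency and uniqueness I would, exactly as in case (iii) of Proposition~\ref{prop:ex_uniq_hyp}, assume without loss of generality that $\ell_{n}$ is the largest side, so that~\eqref{eq:11ineq} reads $\ell_{n}>\sum_{k=1}^{n-1}\ell_{k}$, and reduce the existence of the polygon to the existence of a parameter $R>0$ with
\begin{equation*}
  \arsinh\Big(\frac{\ell_{n}}{2R}\Big)=\sum_{k=1}^{n-1}\arsinh\Big(\frac{\ell_{k}}{2R}\Big),
\end{equation*}
the analogue of~\eqref{eq:arsinh_sum}; given such an $R$, the polygon is built by marking off the gaps $a_{k}=\arsinh(\ell_{k}/2R)$ along the hyperbola (now on the whole range $R\in(0,\infty)$, not just $(1,\infty)$, since there is no constraint forcing $R>1$). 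Then I would study
\begin{equation*}
  \Phi(x)=\arsinh\Big(\frac{\ell_{n}}{2x}\Big)-\sum_{k=1}^{n-1}\arsinh\Big(\frac{\ell_{k}}{2x}\Big)
\end{equation*}
on $(0,\infty)$: as $x\to 0^{+}$, $\arsinh(c/2x)=\log(c/x)+O(x^{2})$, so $\Phi(x)=-(n-2)\log x+\log\ell_{n}-\sum_{k=1}^{n-1}\log\ell_{k}+o(1)$, which for $n\geq 3$ tends to $-\infty$; as $x\to\infty$, $\Phi(x)=\tfrac{1}{2x}\big(\ell_{n}-\sum_{k=1}^{n-1}\ell_{k}\big)+o(1/x)>0$ by~\eqref{eq:11ineq}. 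So $\Phi$ has a zero, and the computation of $\Phi'$ in terms of $\tanh a_{k}(x)$ together with the strict superadditivity of $\tanh$ on positive arguments shows $\Phi'>0$ at every positive zero, whence the zero is unique. (When $n=3$ the $x\to 0^{+}$ asymptotic degenerates — $\Phi(x)\to\log(\ell_{3}/(\ell_{1}\ell_{2}/\dots))$ is bounded — so here one should instead note directly that for small $x$, $\arsinh(\ell_{3}/2x)<\arsinh(\ell_{1}/2x)+\arsinh(\ell_{2}/2x)$ since $\arsinh$ is strictly concave on $\R_{\geq 0}$ with $\arsinh 0=0$, giving $\Phi(x)<0$; this subsumes the general case and I would present it uniformly.)

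The one genuinely new point, and the thing I expect to need the most care, is the endpoint behavior at $x\to 0^{+}$: in the hyperbolic proof the relevant interval was $(1,\infty)$ and the sign at the left endpoint came for free from~\eqref{eq:Phi1} via $\cosh$; here the interval is all of $(0,\infty)$ and one must check that $\Phi\to-\infty$ (or at least stays negative) as $x\to 0^{+}$, which is where the strict concavity of $\arsinh$ — equivalently the superadditivity inequality $\arsinh(u+v)<\arsinh u+\arsinh v$ for $u,v>0$ — does the work, and where the hypothesis $n\geq 3$ (rather than $n\geq 2$) is actually used. Everything else — the chord-length formula, the monotonicity of the parameter around the hyperbola, the sign of $\Phi'$ at zeros — is a direct transcription of the hyperbolic argument, replacing $\cosh R$ by $R$ and $\bar\ell$ by $\ell$.
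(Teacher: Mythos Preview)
Your plan is exactly the paper's: it says outright that the hypercycle case of Proposition~\ref{prop:ex_uniq_hyp} proves Theorem~\ref{thm:11}, and you have correctly identified the one place where the transcription is not literal, namely the left endpoint of the interval for $\Phi$. The chord formula, the necessity argument via superadditivity of $\sinh$, the large-$x$ asymptotic~\eqref{eq:Phi_infty_asymptotic}, and the $\Phi'>0$-at-zeros argument via subadditivity of $\tanh$ all carry over verbatim with $\bar\ell\leadsto\ell$ and $\bar R=\cosh R\leadsto R$.

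There is, however, a genuine slip in your treatment of the left endpoint. First, a sign error: since $\arsinh(c/2x)=\log(c/x)+o(1)$ as $x\to 0^{+}$, one gets
\[
\Phi(x)=\log\ell_{n}-\sum_{k=1}^{n-1}\log\ell_{k}+(n-2)\log x+o(1),
\]
with coefficient $+(n-2)$, not $-(n-2)$; your stated conclusion $\Phi(x)\to-\infty$ is nevertheless correct, and it holds for every $n\geq 3$, including $n=3$ (where $(n-2)\log x=\log x\to-\infty$). So the case $n=3$ does not degenerate and needs no separate handling. Second, and more importantly, the fallback argument you propose for $n=3$ via concavity of $\arsinh$ is wrong and should be dropped entirely. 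Subadditivity gives $\arsinh\big(\sum_{i\neq k}\ell_{i}/2x\big)<\sum_{i\neq k}\arsinh(\ell_{i}/2x)$, but since the hypothesis here is $\ell_{k}>\sum_{i\neq k}\ell_{i}$ (not $<$), you also have $\arsinh(\ell_{k}/2x)>\arsinh\big(\sum_{i\neq k}\ell_{i}/2x\big)$, and the two inequalities do not combine to give $\Phi(x)<0$. Indeed, any argument of this form is scale-invariant in $x$ and would force $\Phi<0$ for all $x>0$, contradicting $\Phi(x)>0$ for large $x$. Just keep the (corrected) asymptotic; it does everything you need.
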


Without loss of generality, we will assume that the $n$th side is the
longest, i.e., $k=n$ in~\eqref{eq:11ineq}. Like in the Euclidean case
(Section~\ref{sec:proof_euc}), the construction of such an inscribed
polygon in $\R^{1,1}$ is equivalent to the following analytic problem:
Find a point $a\in\R_{>0}^{n}$ satisfying
\begin{equation}
  \label{eq:11constraint}
  a_{n}=\sum_{i=1}^{n-1}a_{i}
\end{equation}
and
\begin{equation}
  \label{eq:11radius}
  \frac{\ell_{k}}{2}=R\sinh\frac{a_{k}}{2}
\end{equation}
for some $R\in\R$ and all $k\in\{1,\ldots, n\}$. 

This problem admits the following variational formulation. Define the
function $\varphi_{\ell}:\R^{n}\rightarrow \R$ by
\begin{equation*}
  \varphi_{\ell}(a)=\sum_{k=1}^{n-1}\big( 
  \Clh(a_k) +  \log(\ell_k)\,a_k\big)
  -\big(\Clh(a_n) +  \log(\ell_n)\,a_n\big),
\end{equation*}
where $\Clh$ denotes the ``hyperbolic version'' of Clausen's integral,
\begin{equation*}
  \Clh(x)= -\int_0^x \log\Big|2\sinh \frac{t}{2}\Big|\,dt.
\end{equation*}
(This notation is not standard.) The function $\Clh(x)$ can be
expressed in terms of the real part of the dilogarithm function: 
\begin{equation*}
  \Clh(x)=\operatorname{Re}\operatorname{Li}_{2}(e^{x})+\frac{x^{2}}{4}-\frac{\pi^{2}}{6}.
\end{equation*}
Like in the Euclidean case, one sees that $a\in\R_{>0}^{n}$ is a
critical point of $\varphi_{\ell}$ under the
constraint~\eqref{eq:11constraint} if and only if there is an $R$
satisfying equations~\eqref{eq:11radius}. However, the function
$\varphi_{\ell}$ is neither concave nor convex on the
subspace~\eqref{eq:11constraint}, so any proof of Theorem~\ref{thm:11}
(or the hypercycle case of Theorem~\ref{thm:hyp}) based on this
variational principle would have to be more involved.

\paragraph{Acknowledgement.}
This research was supported by DFG SFB/TR 109 ``Discretization in Geometry
and Dynamics''.

\bibliographystyle{abbrv}
\bibliography{cyclic}

\end{document}